\documentclass[11pt,twoside]{article}
\usepackage{mathrsfs}
\usepackage{amsmath}
\usepackage{amsthm}
\input amssymb.sty
\usepackage{amsfonts}
\usepackage{amssymb}
\usepackage{latexsym}
\usepackage[all]{xy}

\date{\empty}
\pagestyle{plain}
\textheight= 21.6 true cm \textwidth =15 true cm
\allowdisplaybreaks[4] \footskip=15pt

\topmargin=27pt \evensidemargin0pt \oddsidemargin0pt
\numberwithin{equation}{section} \theoremstyle{plain}
\newtheorem*{thm*}{Main Theorem}
\newtheorem{theorem}{Theorem}
\newtheorem{corollary}[theorem]{Corollary}
\newtheorem*{corollary*}{Corollary}

\newtheorem*{claim*}{Claim}
\newtheorem{lemma}[theorem]{Lemma}
\newtheorem*{lemma*}{Lemma}

\newtheorem*{proposition*}{Proposition}
\newtheorem{remark}[theorem]{Remark}
\newtheorem*{remark*}{Remark}

\newtheorem*{example*}{Example}

\newtheorem*{question*}{Question}

\newtheorem*{definition*}{Definition}

\newtheorem*{acknowledgements*}{ACKNOWLEDGEMENTS}







\begin{document}
\begin{center}
{\large \bf Projections and Idempotents in $*$-reducing Rings Involving the Moore-Penrose Inverse}\\

\vspace{0.8cm} {\small \bf Xiaoxiang Zhang, \ \ Shuangshuang Zhang, \ \ Jianlong Chen\footnote{Corresponding author.\\
E-mail: z990303@seu.edu.cn (X. Zhang), jlchen@seu.edu.cn (J. Chen).},\ \ Long Wang}\\
\vspace{0.6cm} {\rm Department of Mathematics, Southeast University, Nanjing, 210096, China}
\end{center}

\bigskip

{ \bf  Abstract:}
\leftskip0truemm\rightskip0truemm
In [
Comput. Math. Appl. 59 (2010) 764-778],
Baksalary and Trenkler characterized some complex idempotent matrices of the form
$(I-PQ)^{\dag}P(I-Q)$, $(I-QP)^{\dag}(I-Q)$ and $P(P+Q-QP)^{\dag}$
in terms of the column spaces and null spaces of $P$ and $Q$,
where $P, Q\in \mathbb{C}^{\textsf{OP}}_n = \{L\in \mathbb{C}_{n,n}\ |\ L^2 = L = L^*\}$.
We generalize these results from $\mathbb{C}_{n,n}$ to any $*$-reducing rings.\\
{\textbf{Keywords:}} Moore-Penrose inverse; projection; $*$-reducing ring. \\
\noindent { \textbf{2010 Mathematics Subject Classification:}} 15A09; 16U99; 16W10.
 \bigskip


\section { \bf Introduction}

Before 2010, several results scattered in the literature express an idempotent having given onto and along spaces in terms of a pair of projections
in various settings (see, e.g., \cite{Afriat, Giol, Greville, Kovarik, Vidav}).
Their common assumption is the invertibility of certain functions of the involved projections.
Recently, these results were unified and reestablished by Baksalary and Trenkler \cite{Baksalary Trenkler}
in a generalized form in a complex Euclidean vector space,
where Moore-Penrose inverse was involved instead of the ordinary inverse.
Among others, some complex idempotent matrices of the form
$(I-PQ)^{\dag}P(I-Q)$, $(I-QP)^{\dag}(I-Q)$ and $P(P+Q-QP)^{\dag}$
are characterized in terms of the column spaces and null spaces of $P$ and $Q$,
where $P, Q\in \mathbb{C}^{\textsf{OP}}_n = \{L\in \mathbb{C}_{n,n}\ |\ L^2 = L = L^*\}$.

The purpose of this paper is to generalize the aforementioned results in \cite{Baksalary Trenkler}
from $\mathbb{C}_{n,n}$ to any $*$-reducing rings.
This is achieved based on the characterizations of the Moore-Penrose inverse of the differences and the products of projections
obtained in \cite{Zhang Zhang Chen Wang}.
The main idea behind our argument comes from \cite{Baksalary Trenkler}.
However, the methods based on decomposition or rank of matrix are not available in an arbitrary $*$-reducing ring.
The results in this paper are proved by a purely ring theoretical method.

Throughout this paper, $R$ is an associative ring with unity and an involution $a \mapsto a^*$ satisfying
$(a^*)^* = a$, $(a+b)^* = a^* + b^*$, $(ab)^* = b^*a^*$.
An element $a\in R$ has Moore-Penrose inverse (or MP inverse for short),
if there exists $b$ such that the following equations hold \cite{Penrose}:
$$(1)\ \ aba = a, \quad\quad  (2)\ \ bab = b, \quad\quad  (3)\ \ (ab)^* = ab, \quad\quad (4)\ \ (ba)^{\ast} = ba.$$
In this case, $b$ is unique and denoted by $a^{\dag}$.
Moreover, we have $(a^{\dag})^{\dag} = a$.
It is easy to see that $a$ has MP inverse if and only if $a^{*}$ has MP inverse, and in this case $(a^{*})^{\dag} = (a^{\dag})^{*}$.

We write $R^{-1}$ and $R^{\dag}$ as the set of all invertible elements and all MP invertible elements in $R$, respectively.
If $a\in R^{\dag}$, then $a^{*}a$, $aa^{*}\in R^{\dag}$ and the following equalities hold:
$(a^{*}a)^{\dag} = a^{\dag}(a^{*})^{\dag}$, $(aa^{*})^{\dag} = (a^{*})^{\dag}a^{\dag}$ and
$a^{\dag} = (a^{*}a)^{\dag}a^{*} = a^{*}(aa^{*})^{\dag}.$
(See, e.g., \cite[Lemma 2.1]{Benitez CvetkovicIlic}.)

If $a^{*} = a\in R^{\dag}$, then $aa^{\dag} = a^{\dag}a$.
An idempotent $p \in R$ is called a projection if it is self-adjoint, i.e., $p^{*} = p$.

Recall from \cite{Koliha Patricio} that a ring $R$ is said to be $*$-reducing if, for any element $a\in R$,
$a^*a = 0$ implies $a = 0$.
Note that $R$ is $*$-reducing if and only if the following implications hold for any $a\in R$:
$$a^{*}ax = a^{*}ay \Rightarrow ax = ay \quad\quad \mbox{and} \quad\quad xaa^{*} = yaa^{*} \Rightarrow xa = ya.$$
It is well-known that any $C^{*}$-algebra is a $*$-reducing ring.

Let $X$ and $Y$ be two nonempty subset of $R$.
We write $X\bot Y$ to indicate that $y^{*}x = 0$ for all $(x, y)\in X\times Y$.
Suppose that $X$ and $Y$ are right ideals of a $*$-reducing ring $R$.
Then $X\bot Y$ implies $X\cap Y = \{0\}$.
In particular, if $X + Y = R$ and $X\bot Y$, then we write $X \oplus^{\bot} Y = R$.

\section{ \bf  Main results}

Let us remind the reader that, in what follows, $R$ is always a ring with involution $*$.
By $p$ and $q$ we mean two projections in $R$.
We also fix the notations $a = pqp$, $b = pq(1-p)$, $d = (1-p)q(1-p)$, $\overline{p} = 1-p$ and $\overline{q} = 1-q$.

The following lemmas will be used in the sequel.

\begin{lemma}\label{Lemma 2.2}
\emph{(1)} $bb^{*} = (p-a)-(p-a)^{2}$.

\emph{(2)}  $b^{*}b =d-d^{2}$.
\end{lemma}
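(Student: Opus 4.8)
The plan is to prove both identities by direct computation, reducing everything to the defining relations $p^{2}=p=p^{*}$, $q^{2}=q=q^{*}$ together with the abbreviations $a=pqp$ and $\overline{p}=1-p$. The first step, common to both parts, is to compute the adjoint of $b$. Since $*$ reverses products and fixes $p$ and $q$, we have $b^{*}=[pq(1-p)]^{*}=(1-p)^{*}q^{*}p^{*}=(1-p)qp=\overline{p}qp$.

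For part (1), I would use the idempotency $\overline{p}^{\,2}=\overline{p}$ to collapse $bb^{*}=pq(1-p)(1-p)qp$ into $pq(1-p)qp$. Expanding the middle factor $(1-p)$ and simplifying with $q^{2}=q$ and $p^{2}=p$ then gives $bb^{*}=pq^{2}p-pqpqp=pqp-pqpqp=a-a^{2}$, where the last equality uses $a^{2}=(pqp)(pqp)=pqp^{2}qp=pqpqp$. It then remains to check that $a-a^{2}$ coincides with the claimed expression $(p-a)-(p-a)^{2}$. Expanding the square and using the absorption identities $pa=p(pqp)=pqp=a$ and $ap=(pqp)p=pqp=a$, one finds $(p-a)^{2}=p-2a+a^{2}$, whence $(p-a)-(p-a)^{2}=a-a^{2}$, as required.

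For part (2), the argument is parallel but slightly cleaner. Using $p^{2}=p$ I would reduce $b^{*}b=\overline{p}qp\,pq\overline{p}$ to $\overline{p}qpq\overline{p}$. Writing the central factor as $p=1-\overline{p}$ and again exploiting $q^{2}=q$ together with $\overline{p}^{\,2}=\overline{p}$ yields $b^{*}b=\overline{p}q^{2}\overline{p}-\overline{p}q\overline{p}q\overline{p}=\overline{p}q\overline{p}-\overline{p}q\overline{p}q\overline{p}=d-d^{2}$, since $d=\overline{p}q\overline{p}$ and hence $d^{2}=\overline{p}q\overline{p}^{\,2}q\overline{p}=\overline{p}q\overline{p}q\overline{p}$.

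Neither part presents a genuine obstacle; both are finite manipulations in the ring, requiring no decomposition or rank arguments. The only points demanding care are the correct computation of $b^{*}$ (remembering that $*$ reverses the order of the three factors, so the self-adjointness of $p$ and $q$ is used but their order is swapped) and, in part (1), the bookkeeping that matches the naturally arising form $a-a^{2}$ with the stated form $(p-a)-(p-a)^{2}$; here the identities $pa=ap=a$ are exactly what make the two expressions agree.
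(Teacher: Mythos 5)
Your proof is correct and takes exactly the route the paper intends: the paper's own proof is simply ``By a direct verification,'' and your computation (computing $b^{*}=\overline{p}qp$, reducing $bb^{*}$ to $a-a^{2}$ and $b^{*}b$ to $d-d^{2}$, and matching $(p-a)-(p-a)^{2}=a-a^{2}$ via $pa=ap=a$) is precisely that verification, carried out in full.
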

\begin{proof}
By a direct verification.
\end{proof}

\begin{lemma}\label{Lemma 2.3}
\emph{(1)} If $p\overline{q}\in R^{\dag}$, then $(p-a)(p-a)^{\dag}b = b$.

\emph{(2)} If $\overline{p}q\in R^{\dag}$, then $bdd^{\dag} = b$.

\emph{(3)} If $p\overline{q}, \overline{p}q\in R^{\dag}$, then $bd^{\dag} = (p-a)^{\dag}b$ and $d^{\dag}b^{*} = b^{*}(p-a)^{\dag}$.

\emph{(4)} If $p\overline{q}, \overline{p}q\in R^{\dag}$, then $p-q \in R^{\dag}$.
\end{lemma}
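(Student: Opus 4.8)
The plan is to reduce the MP-invertibility of the self-adjoint element $p-q$ to that of its square, and to compute $(p-q)^2$ explicitly. First I would record the Peirce-type decomposition $q = a + b + b^* + d$, using $(pq\overline{p})^* = \overline{p}qp$, so that $p-q = (p-a) - b - b^* - d$. A direct expansion then gives
$$(p-q)^2 = p + q - pq - qp = (p-pqp) + \overline{p}q\overline{p} = (p-a) + d,$$
where the middle equality is the routine identity $\overline{p}q\overline{p} = q - qp - pq + pqp$. (Equivalently, this block-diagonal form follows from Lemma \ref{Lemma 2.2} together with the identity $(p-a)b = bd$, which is itself a direct computation.)

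Next I would identify $p-a$ and $d$ as Gram-type elements. Since $p-a = p\overline{q}p = (p\overline{q})(p\overline{q})^*$ and $d = \overline{p}q\overline{p} = (\overline{p}q)(\overline{p}q)^*$, the hypotheses $p\overline{q}, \overline{p}q \in R^{\dag}$ combined with the stated fact that $cc^* \in R^{\dag}$ whenever $c \in R^{\dag}$ yield $p-a \in R^{\dag}$ and $d \in R^{\dag}$.

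The key step is to assemble these into $(p-a)+d \in R^{\dag}$. Here I would exploit that $p-a \in pRp$ and $d \in \overline{p}R\overline{p}$ live in orthogonal corners, so $(p-a)d = d(p-a) = 0$. The point requiring care is that the MP inverses remain in their corners: for a self-adjoint $u$ with $u = pup$ one checks, using only the four Penrose equations together with self-adjointness of $uu^{\dag}$ and $u^{\dag}u$, that $u^{\dag} = pu^{\dag}p$; hence $(p-a)^{\dag} \in pRp$ and $d^{\dag} \in \overline{p}R\overline{p}$. With this in hand all cross terms vanish, and a direct verification of the four Penrose equations shows $[(p-a)+d]^{\dag} = (p-a)^{\dag} + d^{\dag}$, so $(p-q)^2 \in R^{\dag}$.

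Finally, since $p-q$ is self-adjoint, $(p-q)^2 = (p-q)^*(p-q)$, and it remains to pass from $x^*x \in R^{\dag}$ to $x \in R^{\dag}$ for $x = p-q$. This is where the $*$-reducing hypothesis is essential: setting $b_0 = (x^*x)^{\dag}x^*$, the relation $x^*x(x^*x)^{\dag}x^*x = x^*x$ and the cancellation law $x^*x\,s = x^*x\,t \Rightarrow xs = xt$ give $xb_0x = x$, while the remaining Penrose equations follow from self-adjointness of $(x^*x)^{\dag}$ and the commutation $(x^*x)(x^*x)^{\dag} = (x^*x)^{\dag}(x^*x)$. Thus $p-q \in R^{\dag}$, with $(p-q)^{\dag} = [(p-a)^{\dag} + d^{\dag}](p-q)$. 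I expect the main obstacle to be the corner-containment of the MP inverses in the assembly step, together with the clean verification of the passage $x^*x \in R^{\dag} \Rightarrow x \in R^{\dag}$, since only the forward implication is quoted in the preliminaries; both are handled purely ring-theoretically through the $*$-reducing cancellation laws.
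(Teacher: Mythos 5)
Your proposal addresses only part (4) of the lemma, and that is the main gap: parts (1)--(3) --- the identities $(p-a)(p-a)^{\dag}b=b$, $bdd^{\dag}=b$, $bd^{\dag}=(p-a)^{\dag}b$ and $d^{\dag}b^{*}=b^{*}(p-a)^{\dag}$ --- are never proved, and these are precisely the statements the paper actually relies on later (they are invoked repeatedly in the proofs of Theorems \ref{Theorem 3.4}, \ref{Theorem 3.9} and \ref{Theorem 3.11}, whereas part (4) is never cited again). They are not automatic, but they can be obtained from ingredients you already have: in any ring with involution, $(p-a)(p-a)^{\dag}=p\overline{q}(p\overline{q})^{\dag}$ and $dd^{\dag}=\overline{p}q(\overline{p}q)^{\dag}$ (combine your Gram identifications $p-a=(p\overline{q})(p\overline{q})^{*}$, $d=(\overline{p}q)(\overline{p}q)^{*}$ with the quoted formula $(cc^{*})^{\dag}=(c^{*})^{\dag}c^{\dag}$); since $b=pq\overline{p}=-p\overline{q}\,\overline{p}$ and $b^{*}=\overline{p}qp$, parts (1) and (2) then follow in one line each, e.g. $p\overline{q}(p\overline{q})^{\dag}(-p\overline{q}\,\overline{p})=-p\overline{q}\,\overline{p}=b$. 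Part (3) follows from (1), (2) and the intertwining identity $(p-a)b=bd$ that you mention only in passing: $bd^{\dag}=(p-a)^{\dag}(p-a)bd^{\dag}=(p-a)^{\dag}bdd^{\dag}=(p-a)^{\dag}b$, with the second stated equality its adjoint.

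A second, related problem is that your proof of (4) invokes the $*$-reducing hypothesis, which is not among the hypotheses of this lemma: the ambient convention here is only that $R$ is a ring with involution, and the paper writes ``$*$-reducing'' explicitly whenever it is assumed (Lemma \ref{corollary 2.5}, Theorems \ref{Theorem 3.4}, \ref{Theorem 3.9}, \ref{Theorem 3.11}); the references cited for this lemma prove it in the general setting. So you establish a strictly weaker statement --- harmless for this paper's applications, which all take place in $*$-reducing rings, but not the lemma as stated. The good news is that your detour through the implication $x^{*}x\in R^{\dag}\Rightarrow x\in R^{\dag}$ is avoidable: everything else in your argument (the computation $(p-q)^{2}=(p-a)+d$, the corner containment $u^{\dag}=pu^{\dag}p$ for self-adjoint $u=pup$, and the orthogonal assembly $[(p-a)+d]^{\dag}=(p-a)^{\dag}+d^{\dag}$) is valid in any ring with involution, and once (1) and (2) are in hand one can verify the four Penrose equations directly for your candidate $t=[(p-a)^{\dag}+d^{\dag}](p-q)$: all cross terms die by the corner containments, the equation $(p-q)t(p-q)=p-q$ reduces exactly to $bd^{\dag}d=b$ and $b^{*}(p-a)^{\dag}(p-a)=b^{*}$ (i.e. to (1) and (2)), and the two symmetry conditions hold because $t(p-q)=[(p-a)^{\dag}+d^{\dag}](p-q)^{2}$ and $(p-q)t=(p-q)[(p-a)^{\dag}+d^{\dag}](p-q)$ are visibly self-adjoint. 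This proves (4), with the same formula for $(p-q)^{\dag}$, without $*$-reducing, and it explains the architecture of the lemma: (1)--(3) are the pieces from which (4) is assembled. Note finally that the paper itself gives no argument here --- it only cites two external references --- so a complete self-contained proof along these lines would in fact be a genuine addition.
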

\begin{proof}
See \cite[Lemma 2.3 and Theorem 4.1(iii)]{Benitez CvetkovicIlic} or \cite[Lemma 3]{Zhang Zhang Chen Wang}.
\end{proof}

\begin{lemma}\label{corollary 2.5}
The following conditions are equivalent for any two projections $p$ and $q$ in a $*$-reducing ring $R$:

\emph{(1)} $1-pq\in R^{\dag}$, \quad \emph{(2)} $p-pqp\in R^{\dag}$, \quad \emph{(3)} $p-pq \in R^{\dag}$, \quad  \emph{(4)} $p-qp\in R^{\dag}$,

\emph{(5)} $1-qp\in R^{\dag}$, \quad \emph{(6)} $q-pq\in R^{\dag}$,  \hspace{4.7mm} \emph{(7)} $\overline{p}q\overline{p}\in R^{\dag}$, \hspace{7.2mm}
\emph{(8)} $p+q-qp\in R^{\dag}$. \\
Moreover, when any one of these conditions is satisfied we have the following equalities:

\emph{(i)} $(p-pqp)^{\dag} = (1-pq)^{\dag}p$;

\emph{(ii)} $(1-pq)^{\dag} = (p-a)^{\dag}(1+b) + 1-p$;

\emph{(iii)} $(1-pq)(1-pq)^{\dag} = (1-pq)^{\dag}(1-pq) = (p-a)(p-a)^{\dag}+1-p$.
\end{lemma}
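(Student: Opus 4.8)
The plan is to split the eight conditions into the three natural groups tied to the elements $p\overline q$, $\overline p q$ and $1-pq$, settle the equivalences inside each group with the involution alone, and then reduce everything to the single core equivalence $(1)\Leftrightarrow(2)$, which I would prove by hand and from which the three displayed equalities also drop out. For the within-group step I would use the fact that $x\in R^{\dag}\iff x^{*}\in R^{\dag}\iff xx^{*}\in R^{\dag}$, the non-trivial half being standard in $*$-reducing rings (with $x^{\dag}=x^{*}(xx^{*})^{\dag}$; see \cite{Benitez CvetkovicIlic}). Taking $x=p\overline q=p-pq$ gives $x^{*}=\overline q p=p-qp$ and $xx^{*}=p\overline q p=p-pqp=p-a$, so $(2)\Leftrightarrow(3)\Leftrightarrow(4)$; taking $x=\overline p q=q-pq$ gives $xx^{*}=\overline p q\overline p=d$, so $(6)\Leftrightarrow(7)$; and $(1)\Leftrightarrow(5)$ since $(1-pq)^{*}=1-qp$.

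The heart of the matter is $(1)\Leftrightarrow(2)$, which I would prove for an arbitrary pair of projections. Put $u=1-pq$. Since $pq=a+b$, we have $u=1-a-b=\overline p+(p-a)-b$, which is ``upper triangular'' for the Peirce decomposition relative to $p$ (blocks $p-a\in pRp$, $-b\in pR\overline p$, $\overline p\in\overline p R\overline p$), and $up=p-a$, $pu=(p-a)-b$, $\overline p u=\overline p$ hold by direct multiplication. For $(2)\Rightarrow(1)$ I would guess the inverse suggested by the triangular shape and verify it: assuming $p-a\in R^{\dag}$, set $v=(p-a)^{\dag}(1+b)+\overline p$ and check the four Penrose equations for $u$. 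The only non-formal inputs are $(p-a)(p-a)^{\dag}b=b$ from Lemma~\ref{Lemma 2.3}(1) and $bb^{*}=(p-a)-(p-a)^{2}$ from Lemma~\ref{Lemma 2.2}(1), together with $(p-a)(p-a)^{\dag}=(p-a)^{\dag}(p-a)$ as $p-a$ is self-adjoint; a short computation then yields $uv=(p-a)(p-a)^{\dag}+\overline p=vu$, $uvu=u$ and $vuv=v$. This proves $(1)$ and simultaneously gives all three equalities: $(ii)$ is the definition of $v$, $(iii)$ is the value found for $uv=vu$, and $(i)$ follows from $(1-pq)^{\dag}p=vp=(p-a)^{\dag}$.

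The main obstacle is the converse $(1)\Rightarrow(2)$, where no triangular inverse is handed to us. Given $u=1-pq\in R^{\dag}$ with $w=u^{\dag}$, I would show that $g:=pwp$ is a Moore--Penrose inverse of $p-a=up$. The key structural observation is that $\overline p u=\overline p$ forces $\overline p\le wu$ (one checks $\overline p wu=wu\,\overline p=\overline p$), so $e:=wu-\overline p$ is a projection with $e\le p$. Using $pu=(p-a)-b$ and $bp=0$ one computes $g(p-a)=p(wu)p=e$, which is self-adjoint, and then $(p-a)e=p-a$ yields $(p-a)g(p-a)=p-a$; thus $g$ is a $\{1,4\}$-inverse of $p-a$. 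Applying the same construction to $u^{*}=1-qp$, which satisfies the mirror relation $u^{*}\overline p=\overline p$, produces a $\{1,3\}$-inverse of $p-a$, and the existence of a $\{1,3\}$- and a $\{1,4\}$-inverse forces $p-a\in R^{\dag}$. The point where the argument is genuinely not symmetric is the self-adjointness of $(p-a)g$ (Penrose $(3)$): the range projection $uw$ need not dominate $\overline p$, so I would obtain it from the mirror construction on $u^{*}$ rather than directly. This is the step I expect to cost the most care.

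Finally, I would bring in $(7)$ and $(8)$ not through a direct ``$p-a\in R^{\dag}\iff d\in R^{\dag}$'' argument --- which encodes the subtle equivalence of $p\overline q p$ and $\overline p q\overline p$ and is awkward without a decomposition --- but by symmetry. Applying the core equivalence, now proved for \emph{every} pair of projections, to the complemented pairs $(\overline q,\overline p)$ and $(\overline p,\overline q)$, and using $1-\overline q\,\overline p=p+q-qp$ and $\overline p-\overline p\,\overline q\,\overline p=\overline p q\overline p$, one sees that $(8)$ is condition $(1)$ for $(\overline q,\overline p)$ and $(7)$ is condition $(2)$ for $(\overline p,\overline q)$. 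The core equivalence then links $(8)$ back to $p\overline q\in R^{\dag}$ (via condition $(4)$ for $(\overline q,\overline p)$, which equals $p\overline q$), and links $(7)$ back to $1-pq\in R^{\dag}$ up to the harmless involution identifying $1-\overline p\,\overline q$ with $(p+q-qp)^{*}$, closing the chain.
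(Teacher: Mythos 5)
Your proof is correct, but it is necessarily a different route from the paper's, because the paper gives no proof of this lemma at all: its ``proof'' is a one-line citation of the companion paper \cite{Zhang Zhang Chen Wang}, where the equivalences and formulas (i)--(iii) are established. Your argument is a genuine self-contained replacement, and its key steps check out. The group reductions are sound: in a $*$-reducing ring $xx^{*}\in R^{\dag}$ does force $x\in R^{\dag}$ with $x^{\dag}=x^{*}(xx^{*})^{\dag}$, since $y=xx^{*}(xx^{*})^{\dag}x-x$ satisfies $yy^{*}=0$; this gives $(2)\Leftrightarrow(3)\Leftrightarrow(4)$ and $(6)\Leftrightarrow(7)$, and $(1)\Leftrightarrow(5)$ is the involution. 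In $(2)\Rightarrow(1)$ your candidate $v=(p-a)^{\dag}(1+b)+\overline{p}$ does satisfy all four Penrose equations for $u=\overline{p}+(p-a)-b$, yielding (ii), (iii), and (i) upon right multiplication by $p$; two points should be made explicit in a write-up: Lemma \ref{Lemma 2.3}(1) is applicable only because you derived $(3)$ from $(2)$ beforehand, and the computation silently uses the corner identities $p(p-a)^{\dag}=(p-a)^{\dag}p=(p-a)^{\dag}$, which need the one-line argument $(p-a)^{\dag}=(p-a)^{\dag}[(p-a)^{\dag}]^{*}(p-a)$. Your $(1)\Rightarrow(2)$ is the most original step and is correct: $\overline{p}u=\overline{p}$ gives $\overline{p}wu=wu\overline{p}=\overline{p}$, so $g=pwp$ satisfies $g(p-a)=p(wu)p$, a projection, and $(p-a)g(p-a)=p-a$, hence $g$ is a $\{1,4\}$-inverse; the asymmetry you flag is genuine (nothing controls $uw$ against $\overline{p}$), and since $p-a$ is self-adjoint the adjoint $g^{*}=pw^{*}p$ is a $\{1,3\}$-inverse, so $p-a\in R^{\dag}$ by the purely equational fact that $a^{(1,4)}aa^{(1,3)}$ is the Moore--Penrose inverse. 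The complementation trick then correctly sweeps in the remaining conditions: $(8)$ is condition $(1)$ for the pair $(\overline{q},\overline{p})$ and hence equivalent to $p\overline{q}\in R^{\dag}$, while $(7)$ is condition $(2)$ for $(\overline{p},\overline{q})$ and hence equivalent, through the harmless adjoint, to $(8)$. Compared with the paper, your route buys self-containedness --- only Lemmas \ref{Lemma 2.2} and \ref{Lemma 2.3} plus standard MP identities are used --- and it cleanly isolates where $*$-reducing is genuinely needed (the corner-versus-square equivalences and Lemma \ref{Lemma 2.3}) from what holds by formal Penrose manipulation (both directions of the core equivalence, given those lemmas).
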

\begin{proof}
See \cite[(2.9), (2.9), (2.13), Corollary 5 and 7]{Zhang Zhang Chen Wang}.
\end{proof}

The following lemma has been included in the proof of \cite[Lemma 2.3(i)]{Benitez CvetkovicIlic}.

\begin{lemma}\label{Lemma 3.1}
If $p$ and $q$ are two projections in $R$ such that $pq\in R^{\dag}$,
then $pqp(pqp)^{\dag}pq = pq$.
\end{lemma}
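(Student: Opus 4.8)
The plan is to express $pqp$ as a Gram-type product of $pq$ and then apply the standard formula for the Moore-Penrose inverse of such a product. Put $x = pq$. Since $p$ and $q$ are projections, $x^{*} = q^{*}p^{*} = qp$, so that $xx^{*} = pq\,qp = pqp = a$; in particular the hypothesis $pq \in R^{\dag}$ forces $a = xx^{*} \in R^{\dag}$ by the facts recorded in the Introduction, so the expression $(pqp)^{\dag} = a^{\dag}$ is meaningful and the whole left-hand side $pqp(pqp)^{\dag}pq = aa^{\dag}x$ is well defined.

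The heart of the argument is to show that $aa^{\dag} = xx^{\dag}$. First I would invoke the identity $(xx^{*})^{\dag} = (x^{*})^{\dag}x^{\dag}$ quoted in the Introduction, together with $(x^{*})^{\dag} = (x^{\dag})^{*}$, to write $aa^{\dag} = xx^{*}(x^{\dag})^{*}x^{\dag}$. Next I would simplify the middle factor $x^{*}(x^{\dag})^{*}$: applying $(AB)^{*} = B^{*}A^{*}$ gives $x^{*}(x^{\dag})^{*} = (x^{\dag}x)^{*}$, and since $x^{\dag}x$ is self-adjoint by Penrose equation $(4)$ this equals $x^{\dag}x$. Substituting back and using the regroupings $x\,(x^{\dag}x)\,x^{\dag} = (xx^{\dag}x)\,x^{\dag}$ together with Penrose equation $(1)$ in the form $xx^{\dag}x = x$, I obtain $aa^{\dag} = xx^{\dag}$.

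Once $aa^{\dag} = xx^{\dag}$ is in hand the conclusion is immediate: $pqp(pqp)^{\dag}pq = aa^{\dag}x = xx^{\dag}x = x = pq$, again by Penrose equation $(1)$. I expect the only delicate point to be the adjoint bookkeeping that collapses $x^{*}(x^{\dag})^{*}$ to $x^{\dag}x$; everything else is a direct application of the Moore-Penrose axioms and the product formulas already available, and no matrix decomposition or rank argument is needed, in keeping with the purely ring-theoretic spirit of the paper.
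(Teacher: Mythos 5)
Your proof is correct and follows essentially the same route as the paper: the paper's proof also rests on the Gram-product identity $(pqp)^{\dag} = \bigl[(pq)^{\dag}\bigr]^{*}(pq)^{\dag}$ (i.e.\ $(xx^{*})^{\dag} = (x^{\dag})^{*}x^{\dag}$ with $x = pq$) and then declares the remaining Penrose-axiom computation ``easy to verify.'' You have simply written out that verification explicitly, via $x^{*}(x^{\dag})^{*} = x^{\dag}x$ and $xx^{\dag}x = x$, which is exactly the intended calculation.
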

\begin{proof}
Note that $(pqp)^{\dag} = [(pq)^{\dag}]^*(pq)^{\dag}$ since $pq\in R^{\dag}$.
It is easy to verify that $pqp(pqp)^{\dag}pq = pq$.
\end{proof}

The next lemma is of interest in its own right.

\begin{lemma}\label{Lemma 3.2}
Let $p$ and $q$ be two projections in $R$.

\emph{(1)} If $\overline{p}q \in R^{\dag}$,
           then $(\overline{p}q)^{\dag} = q(\overline{p}q\overline{p})^{\dag}$.

\emph{(2)} If $\overline{p}q\in R^{\dag}$, then $x = p+\overline{p}(\overline{p}q)^{\dag}$ is a projection and $xR = pR+qR$.

\emph{(3)} If $p\overline{q}\in R^{\dag}$, then $y = p-p(p\overline{q})^{\dag}$ is a projection and $yR = pR\cap qR$.

\emph{(4)} Suppose that $\overline{p}q\in R^{\dag}$, then $pR+qR = R$ if and only if $\overline{p}q\overline{p}R = \overline{p}R$.
\end{lemma}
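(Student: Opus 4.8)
The plan is to dispatch the four parts in order, using repeatedly the reduction formulas $a^{\dag}=a^{*}(aa^{*})^{\dag}=(a^{*}a)^{\dag}a^{*}$ together with the fact that a self-adjoint element $s\in R^{\dag}$ satisfies $ss^{\dag}=s^{\dag}s$. For (1) I would set $c=\overline{p}q$ and observe $cc^{*}=\overline{p}q\overline{p}=d$, so $c^{\dag}=c^{*}(cc^{*})^{\dag}=q\overline{p}d^{\dag}$. Since $d$ is self-adjoint and lies in the corner $\overline{p}R\overline{p}$, the projection $e:=dd^{\dag}=d^{\dag}d$ satisfies $pe=p\,d\,d^{\dag}=0$ (because $pd=0$), hence $pd^{\dag}=p(ed^{\dag})=0$ and, taking adjoints, $d^{\dag}p=0$. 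Therefore $q\overline{p}d^{\dag}=qd^{\dag}-qpd^{\dag}=qd^{\dag}$, which is exactly the asserted formula $(\overline{p}q)^{\dag}=q(\overline{p}q\overline{p})^{\dag}$. No obstacle is expected here: everything is forced by the reduction formula and by the location of $d^{\dag}$ in $\overline{p}R\overline{p}$.

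For (2) the point is that $\overline{p}(\overline{p}q)^{\dag}$ collapses to a projection orthogonal to $p$. Using (1) and $d^{\dag}=\overline{p}d^{\dag}$ I would compute $\overline{p}(\overline{p}q)^{\dag}=\overline{p}qd^{\dag}=\overline{p}q\overline{p}d^{\dag}=dd^{\dag}=e$, so that $x=p+e$ with $pe=ep=0$; this makes $x$ a projection at once. For the range, $e=cc^{\dag}$ is the projection onto $cR=\overline{p}qR$, so $xR=pR+eR=pR+\overline{p}qR$, and the identities $\overline{p}q=q-pq$ and $q=pq+\overline{p}q$ give $pR+\overline{p}qR=pR+qR$.

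Part (3) I would treat as the complementary dual. Applying (1) with $(p,q)$ replaced by $(\overline{p},\overline{q})$ yields $(p\overline{q})^{\dag}=\overline{q}(p\overline{q}p)^{\dag}$, and the computation of (2) then gives $p(p\overline{q})^{\dag}=e'$, where $e'=(p\overline{q})(p\overline{q})^{\dag}$ is a projection with $pe'=e'p=e'$; hence $y=p-e'$ is a projection below $p$. The inclusion $yR\subseteq pR$ is immediate from $py=y$, and $yR\subseteq qR$ I would get by showing $\overline{q}y=0$, which reduces to $\overline{q}e'=\overline{q}p$ and follows from $\overline{q}(p\overline{q})=(p\overline{q})^{*}(p\overline{q})$ and $(p\overline{q})^{*}(p\overline{q})(p\overline{q})^{\dag}=(p\overline{q})^{*}$. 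For the reverse inclusion, given $z\in pR\cap qR$ one has $pz=z=qz$, so $(p\overline{q})^{*}z=\overline{q}z=0$; writing $(p\overline{q})^{\dag}=\bigl((p\overline{q})^{*}(p\overline{q})\bigr)^{\dag}(p\overline{q})^{*}$ then forces $(p\overline{q})^{\dag}z=0$, whence $e'z=0$ and $yz=z$, i.e. $z\in yR$.

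Finally, (4) follows from (2): since $x$ is idempotent, $pR+qR=R$ iff $xR=R$ iff $x=1$, i.e. iff $e=\overline{p}$, that is $dd^{\dag}=\overline{p}$. Now $eR=dd^{\dag}R=dR$ and $e\le\overline{p}$ (meaning $\overline{p}e=e\overline{p}=e$, from $pe=ep=0$), so $dd^{\dag}=\overline{p}$ is equivalent to $dR=\overline{p}R$: one direction is $dR=eR=\overline{p}R$, and conversely $dR=\overline{p}R$ gives $\overline{p}\in eR$, so $\overline{p}=e\overline{p}=e$. The hard part, I expect, is precisely this last step — promoting an equality of right ideals to an equality of idempotents without the rank or matrix-decomposition arguments available over $\mathbb{C}_{n,n}$ — together with the two reverse inclusions in (2) and (3); these are the places where the purely ring-theoretic character of the argument is essential.
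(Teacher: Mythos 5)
Your proof is correct in all four parts, and it is genuinely more self-contained than the paper's. Where the paper simply cites Ben\'{i}tez and Cvetkovi\'{c}-Ili\'{c} for (1) and (3), you derive them from the reduction formula $a^{\dag}=a^{*}(aa^{*})^{\dag}=(a^{*}a)^{\dag}a^{*}$, and both verifications check out: in (3), $\overline{q}e'=(p\overline{q})^{*}(p\overline{q})(p\overline{q})^{\dag}=(p\overline{q})^{*}=\overline{q}p$ gives $\overline{q}y=0$, and $(p\overline{q})^{*}z=0$ kills $(p\overline{q})^{\dag}z$ for $z\in pR\cap qR$. For (2) you and the paper reach the same decomposition $x=p+dd^{\dag}$ with $d=\overline{p}q\overline{p}$, but then diverge: the paper proves $xR=pR+qR$ by two explicit element computations (rewriting $x=p[\cdots]+q[\cdots]$ and showing $pr_{1}+qr_{2}=x(pr_{1}+qr_{2})$), each resting on Lemma \ref{Lemma 3.1}, i.e.\ $pqp(pqp)^{\dag}pq=pq$; you instead identify $dd^{\dag}$ with the range projection $cc^{\dag}$ of $c=\overline{p}q$, so that $xR=pR+cR=pR+\overline{p}qR$, and finish with the two-line ideal identity $pR+\overline{p}qR=pR+qR$. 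Your observation that $pd^{\dag}=0=d^{\dag}p$ (so $d^{\dag}$ stays in the corner $\overline{p}R\overline{p}$) is what replaces Lemma \ref{Lemma 3.1} throughout. For (4) both arguments reduce the statement to $dd^{\dag}=\overline{p}$, but the paper gets ``$\Rightarrow$'' by multiplying the equation $1=xr$ by suitable elements, and ``$\Leftarrow$'' by citing an external uniqueness lemma, whereas you argue structurally: an idempotent $x$ with $xR=R$ equals $1$, and $dR=\overline{p}R$ together with $e\leq\overline{p}$ forces $e=\overline{p}$ --- exactly the ``ideal equality to idempotent equality'' promotion that you correctly flag as the crux, proved inline rather than outsourced. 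What the paper's route buys is brevity, since three of the four parts are delegated to the reference; what yours buys is a conceptual, reference-free proof whose every step is visibly valid in an arbitrary ring with involution.
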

\begin{proof}
(1) See \cite[Theorem 4.1(ii)]{Benitez CvetkovicIlic}.

(2) This was proved in \cite[Lemma 4.2(i)]{Benitez CvetkovicIlic}, where $(p+q)^{\dag}$ was involved.
    We provide here another argument.

    Since $x = p+\overline{p}(\overline{p}q)^{\dag} = p+\overline{p}q(\overline{p}q\overline{p})^{\dag}
             = p+\overline{p}q\overline{p}(\overline{p}q\overline{p})^{\dag}$,
    we have $x^2 = x = x^*$.

    Moreover, by Lemma \ref{Lemma 3.1} it follows that
    \begin{eqnarray*}
      x & = & p+\overline{p}q\overline{p}(\overline{p}q\overline{p})^{\dag}
          =   p+\overline{p}q\overline{p}(\overline{p}q\overline{p})^{\dag}-q(1-p)+q(1-p)\\
        & = & p+\overline{p}q\overline{p}(\overline{p}q\overline{p})^{\dag}
              -q(1-p)q(1-p)(\overline{p}q\overline{p})^{\dag}+q(1-p)\quad\quad (\mbox{see Lemma \ref{Lemma 3.1}})\\
        & = & p+(1-q)\overline{p}q\overline{p}(\overline{p}q\overline{p})^{\dag}+q(1-p)\\
        & = &  p-(1-q)pq\overline{p}(\overline{p}q\overline{p})^{\dag}+q(1-p)\\
        & = &  p-pq\overline{p}(\overline{p}q\overline{p})^{\dag}+qpq\overline{p}(\overline{p}q\overline{p})^{\dag}+q(1-p)\\
        & = & p[1-q\overline{p}(\overline{p}q\overline{p})^{\dag}]+q[pq\overline{p}(\overline{p}q\overline{p})^{\dag}+(1-p)].
    \end{eqnarray*}
    Hence $xR \subseteq pR + qR$.
    Meanwhile, for any $r_{1}, r_{2}\in R$, we have
    \begin{eqnarray*}
       pr_{1}+qr_{2} & = & pr_{1}+(pq+\overline{p}q)r_{2} = p(r_{1}+qr_{2})+\overline{p}qr_{2}\\
                     & = & p(pr_{1}+qr_{2})+(\overline{p}q\overline{p})
                           (\overline{p}q\overline{p})^{\dag}\overline{p}qr_{2}\quad\quad (\mbox{see Lemma \ref{Lemma 3.1}})\\
                     & = & p(pr_{1}+qr_{2})+(\overline{p}q\overline{p})(\overline{p}q\overline{p})^{\dag}\overline{p}(pr_{1}+qr_{2})\\
                     & = & p(pr_{1}+qr_{2})+(\overline{p}q\overline{p})(\overline{p}q\overline{p})^{\dag}(pr_{1}+qr_{2})\\
                     & = & [p+(\overline{p}q\overline{p})(\overline{p}q\overline{p})^{\dag}](pr_{1}+qr_{2}) = x(pr_{1}+qr_{2}),
    \end{eqnarray*}
    from which one can see that $pR + qR \subseteq xR$.

(3) See \cite[Lemma 4.2(ii)]{Benitez CvetkovicIlic}.

(4) ``$\Rightarrow$" First, $R = pR+qR = [p+\overline{p}(\overline{p}q)^{\dag}]R$ by (2).
                     Hence
                     \begin{eqnarray}
                           1 = [p+\overline{p}(\overline{p}q)^{\dag}]r = pr+\overline{p}q(\overline{p}q\overline{p})^{\dag}r
                     \end{eqnarray}
                     for some $r\in R$.
                     Left-multiplying (2.1) by $\overline{p}q\overline{p}(\overline{p}q\overline{p})^{\dag}$ we obtain
                     $\overline{p}q\overline{p}(\overline{p}q\overline{p})^{\dag} = \overline{p}q\overline{p}(\overline{p}q\overline{p})^{\dag}r$.
                     Similarly, right-multiplying (2.1) by $p$ one can get $p = pr$.
                     Whence
                     \begin{eqnarray*}
                           (\overline{p}q\overline{p})(\overline{p}q\overline{p})^{\dag}
                           = (\overline{p}q\overline{p})^{\dag}(\overline{p}q\overline{p})r
                           \overset{(2.1)}{=\!\!=\!\!=\!\!=} 1-pr
                           = 1-p.
                     \end{eqnarray*}
                     Consequently, $\overline{p}q\overline{p}R = \overline{p}q\overline{p}(\overline{p}q\overline{p})^{\dag}R
                                                               = \overline{p}R$.

    ``$\Leftarrow$" Since $\overline{p}q\in R^{\dag}$, we have $\overline{p}q\overline{p} \in R^{\dag}$.
                    If $\overline{p}R = \overline{p}q\overline{p}R
                                      = \overline{p}q\overline{p}(\overline{p}q\overline{p})^{\dag}R$,
                    then $1-p = \overline{p}q\overline{p}(\overline{p}q\overline{p})^{\dag}$
                    by \cite[Lemma 4.1(4)]{Benitez CvetkovicIlic}.
                    According to (1), we have
                    $1 = p+1-p = p+\overline{p}q\overline{p}(\overline{p}q\overline{p})^{\dag} = p+\overline{p}(\overline{p}q)^{\dag}$.
                    Therefore, $R = [p+\overline{p}(\overline{p}q)^{\dag}]R = pR+qR$ follows by (2).
\end{proof}

\begin{remark}\label{Remark 1}
\emph{Let $I$ be a right ideal of $R$.
If $I$ is generated by a projection,
then there exists a unique projection $p\in R$ such that $I = pR$ (see, e.g., \cite[Lemma 4.1(4)]{Benitez CvetkovicIlic}).
Therefore, in Lemma \ref{Lemma 3.2}, the projection $x$ such that $xR = pR+qR$
(resp., $y$ such that $yR = pR\cap qR$) is unique.}
\end{remark}

\begin{lemma}\label{Lemma 3.3}
If $e^2 = e\in R^{\dag}$, then $ee^{\dag}R = eR$ and $(1-e^{\dag}e)R = (1-e)R$.
\end{lemma}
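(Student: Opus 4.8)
The plan is to establish each of the two ideal identities by proving two inclusions, relying only on the first Penrose equation $ee^{\dag}e = e$ together with the idempotency $e^{2} = e$; in particular, neither the $*$-reducing hypothesis nor the self-adjointness axioms (3) and (4) are needed. The one structural fact I use repeatedly is that for any idempotent $f\in R$ one has $fR = \{y\in R : fy = y\}$, since $y = fx$ forces $fy = f^{2}x = fx = y$ and conversely $fy = y$ gives $y\in fR$. Applying this to $1-e$ (which is idempotent because $e$ is) gives in particular $(1-e)R = \{y\in R : ey = 0\}$.

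For the first identity, the inclusion $ee^{\dag}R\subseteq eR$ is immediate, since $ee^{\dag}r = e(e^{\dag}r)\in eR$. For the reverse inclusion I would factor $e = (ee^{\dag})e$ using $ee^{\dag}e = e$, so that every $er$ equals $ee^{\dag}(er)\in ee^{\dag}R$; hence $eR\subseteq ee^{\dag}R$, and the two right ideals coincide.

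For the second identity, I would first observe that $e^{\dag}e$ is idempotent, because $(e^{\dag}e)(e^{\dag}e) = e^{\dag}(ee^{\dag}e) = e^{\dag}e$ by equation (1); thus $1-e^{\dag}e$ is idempotent and the fixed-point description above applies to it. To obtain $(1-e^{\dag}e)R\subseteq (1-e)R$, I would compute $e(1-e^{\dag}e) = e - ee^{\dag}e = 0$, so that every element $(1-e^{\dag}e)r$ is annihilated on the left by $e$ and therefore lies in $(1-e)R = \{y : ey = 0\}$. For the opposite inclusion I would compute $(1-e^{\dag}e)(1-e) = (1-e) - e^{\dag}e(1-e) = 1-e$, where $e^{\dag}e(1-e) = e^{\dag}e - e^{\dag}e^{2} = e^{\dag}e - e^{\dag}e = 0$ uses $e^{2} = e$; this shows each $(1-e)r$ is fixed by the idempotent $1-e^{\dag}e$ and hence lies in $(1-e^{\dag}e)R$.

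There is no genuine obstacle here: once the correct characterization of right ideals generated by idempotents is in place, the lemma reduces to a few short computations. The only point that warrants care is bookkeeping on which hypotheses are actually invoked — the idempotency of $1-e^{\dag}e$ comes from Penrose equation (1) rather than from self-adjointness, and the second inclusion of the second identity is the one place where $e^{2} = e$ (not merely the MP axioms) is essential.
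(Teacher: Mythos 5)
Your proof is correct and takes essentially the same route as the paper's: both identities rest on the computations $ee^{\dag}e = e$ and $(1-e^{\dag}e)(1-e) = 1-e$, and your annihilator step $e(1-e^{\dag}e) = 0$ is just the expanded form of the paper's factorization $1-e^{\dag}e = (1-e)(1-e^{\dag}e)$. The detour through the fixed-point description of $fR$ for an idempotent $f$ is harmless but unnecessary, since the two factorizations already yield both inclusions directly.
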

\begin{proof}
It is easy to see that $ee^{\dag}R = eR$ since $ee^{\dag}e = e$.
On the other hand, $1-e^{\dag}e = (1-e)(1-e^{\dag}e)$ and $1-e = (1-e^{\dag}e)(1-e)$ imply $(1-e^{\dag}e)R = (1-e)R$.
\end{proof}

The following theorem and its corollary generalize their counterpart in \cite{Baksalary Trenkler}.

\begin{theorem}\label{Theorem 3.4}
Let $p$ and $q$ be projections in a $*$-reducing ring $R$ such that $1-pq\in R^{\dag}$
Then

\emph{(1)} $(\overline{q}p)^{\dag} = (1-pq)^{\dag}p\overline{q}$ is an idempotent;

\emph{(2)} $(\overline{q}p)^{\dag}R = pR\cap (\overline{p}R+\overline{q}R)$;

\emph{(3)} $[1-(\overline{q}p)^{\dag}]R = (\overline{p}R\cap\overline{q}R)\oplus^{\bot} qR$.
\end{theorem}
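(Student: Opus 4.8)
The plan is to set $c=\overline q p=p-qp$, which lies in $R^{\dag}$ by Lemma \ref{corollary 2.5}(4), and to read off its Moore--Penrose inverse from the identity $c^{\dag}=(c^{*}c)^{\dag}c^{*}$. Since $c^{*}=p\overline q$ and $c^{*}c=p\overline q p=p-a$, this gives $c^{\dag}=(p-a)^{\dag}p\overline q$, and Lemma \ref{corollary 2.5}(i) (which reads $(p-a)^{\dag}=(1-pq)^{\dag}p$) turns this into the asserted formula $(\overline q p)^{\dag}=(1-pq)^{\dag}p\overline q$. Writing $g=(\overline q p)^{\dag}$, the first thing I would record is that $p-a=p\overline q p$ is self-adjoint and lies in $pRp$, so that $(p-a)^{\dag}\in pRp$; in particular $\overline p(p-a)^{\dag}=0$. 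To prove $g^{2}=g$ I would use $p\overline q=(p-a)-b$ to expand
\begin{equation*}
g^{2}=(p-a)^{\dag}\big[(p-a)(p-a)^{\dag}-b(p-a)^{\dag}\big]p\overline q=g-(p-a)^{\dag}b\,c^{\dag},
\end{equation*}
and then observe that the cross term dies because $b\,c^{\dag}=pq\overline p(p-a)^{\dag}p\overline q=pq\big[\overline p(p-a)^{\dag}\big]p\overline q=0$. This settles (1).

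For (2) I would set $f=(p-a)(p-a)^{\dag}=(p-a)^{\dag}(p-a)$, a projection with $pf=fp=f$, and note $gR=c^{\dag}R=c^{\dag}cR=fR$ since $c^{\dag}c=(p-a)^{\dag}(p-a)=f$. The target $\overline pR+\overline qR$ I would identify by applying Lemma \ref{Lemma 3.2}(2) to the pair $(\overline p,\overline q)$, which is legitimate as $\overline{\overline p}\,\overline q=p\overline q\in R^{\dag}$: it produces the projection $x=\overline p+p(p\overline q)^{\dag}$ with $xR=\overline pR+\overline qR$. The bridge is $p(p\overline q)^{\dag}=f$, which follows from $(p\overline q)^{\dag}=(c^{*})^{\dag}=(c^{\dag})^{*}$ together with $p(c^{\dag})^{*}=p\overline q p(p-a)^{\dag}=f$; hence $x=f+\overline p$, consistent with Lemma \ref{corollary 2.5}(iii). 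Since $f$ and $\overline p$ are orthogonal projections and $pf=f$, a one-line computation ($z=pz=p(f+\overline p)z=fz$) shows $pR\cap(f+\overline p)R=fR$, so $gR=fR=pR\cap(\overline pR+\overline qR)$.

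For (3) I would use that, $g$ being idempotent, $[1-g]R=\{x\in R:gx=0\}$. The easy inclusions come first: $gq=0$ gives $qR\subseteq[1-g]R$, while for $x\in\overline pR\cap\overline qR$ one has $\overline q x=x$, $px=0$, whence $gx=(p-a)^{\dag}px=0$. The reverse inclusion rests on the key identity $(p-a)g=f\,p\overline q=p\overline q$, where $f\,p\overline q=p\overline q$ comes from $f(p-a)=p-a$ and $fb=b$ (Lemma \ref{Lemma 2.3}(1)); consequently $gx=0$ forces $p\overline q x=(p-a)gx=0$. Decomposing $x=qx+\overline q x$ through $R=qR\oplus^{\bot}\overline qR$, the summand $\overline q x$ then satisfies $p\overline q x=0$, i.e.\ $\overline q x\in\overline pR\cap\overline qR$, so $[1-g]R=(\overline pR\cap\overline qR)+qR$. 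Orthogonality $(\overline pR\cap\overline qR)\bot qR$ is immediate since $qx=0$ for $x\in\overline qR$, and in a $*$-reducing ring orthogonality of right ideals forces the sum to be direct, giving $[1-(\overline q p)^{\dag}]R=(\overline pR\cap\overline qR)\oplus^{\bot}qR$.

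The main obstacle is not the bookkeeping but locating the bridging identities $p(p\overline q)^{\dag}=f$ and $(p-a)g=p\overline q$, together with $(p-a)^{\dag}\in pRp$. These are exactly what must replace the matrix rank and decomposition arguments of \cite{Baksalary Trenkler}: they collapse every Moore--Penrose expression into the single projection $f=(p-a)(p-a)^{\dag}$ and its position relative to $p$ and $q$, after which parts (2) and (3) become pure right-ideal computations valid over an arbitrary $*$-reducing ring.
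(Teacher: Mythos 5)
Your proof is correct, and parts (1) and (2) are essentially the paper's argument in light disguise: the paper also obtains $(\overline{q}p)^{\dag}=(p\overline{q}p)^{\dag}\overline{q}=(1-pq)^{\dag}p\overline{q}$ (quoting Lemma \ref{Lemma 3.2}(1) where you re-derive it from $c^{\dag}=(c^{*}c)^{\dag}c^{*}$, both resting on $(p-a)^{\dag}\in pRp$), and it likewise applies Lemma \ref{Lemma 3.2}(2) to the pair $(\overline{p},\overline{q})$ to realize $\overline{p}R+\overline{q}R$ as $q'R$ with $q'=\overline{p}+p(p\overline{q})^{\dag}$; the only difference in (2) is that the paper invokes Lemma \ref{Lemma 3.2}(3) a second time (with $q$ replaced by $q'$) to identify $pR\cap q'R$, whereas you get it from the elementary observation $pR\cap(f+\overline{p})R=fR$ for the orthogonal projections $f=(p-a)(p-a)^{\dag}$ and $\overline{p}$. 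Part (3) is where you genuinely diverge. The paper constructs the projection $x'=p'+q$ with $x'R=(\overline{p}R\cap\overline{q}R)\oplus^{\bot}qR$ via Lemma \ref{Lemma 3.2}(2)(3), then carries out its heaviest computation (using Lemmas \ref{Lemma 2.3}(3), \ref{Lemma 3.1} and \ref{Lemma 3.2}(1)) to show $1-(\overline{q}p)(\overline{q}p)^{\dag}=x'$, and finally passes from $1-(\overline{q}p)(\overline{q}p)^{\dag}$ to $1-(\overline{q}p)^{\dag}$ by Lemma \ref{Lemma 3.3}. You instead exploit the annihilator description $(1-g)R=\{x\in R:\,gx=0\}$ of the idempotent $g$, the bridge identity $(p-a)g=p\overline{q}$ (a one-line consequence of Lemma \ref{Lemma 2.3}(1) and $p\overline{q}=(p-a)-b$), and the decomposition $x=qx+\overline{q}x$; this is shorter, avoids Lemma \ref{Lemma 3.3} altogether, and replaces the long verification by two inclusions. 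The trade-off: the paper's constructive route produces explicit projections generating each ideal (the formulas (2.2)--(2.7) and $1-(\overline{q}p)(\overline{q}p)^{\dag}=p'+q$), and these by-products are cited later in the proofs of Corollary \ref{Corollary 3.5} and Theorem \ref{Theorem 3.9}; your route proves Theorem \ref{Theorem 3.4} more economically but does not supply those reusable identities.
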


\begin{proof}
(1) Since $1-pq\in R^{\dag}$, we have
    $\overline{q}p\in R^{\dag}$, $p\overline{q}p\in R^{\dag}$ and $(p\overline{q}p)^{\dag} = (1-pq)^{\dag}p$
    by Lemma \ref{corollary 2.5}(1)$\Leftrightarrow$(2)$\Leftrightarrow$(4).
    In view of Lemma \ref{Lemma 3.2}(1), one can see that
    $(\overline{q}p)^{\dag} = (p\overline{q}p)^{\dag}\overline{q} = (1-pq)^{\dag}p\overline{q}$.
    Moreover, it follows that
    $(\overline{q}p)^{\dag}(\overline{q}p)^{\dag} = (p\overline{q}p)^{\dag}\overline{q}(p\overline{q}p)^{\dag}\overline{q}
              = (p\overline{q}p)^{\dag}\overline{q} = (\overline{q}p)^{\dag}$.

(2) First, $p\overline{q}\in R^{\dag}$ follows by the hypothesis and Lemma \ref{corollary 2.5}(1)$\Leftrightarrow$(3).
    Now replacing $p$ and $q$ by $\overline{p}$ and $\overline{q}$ respectively in Lemma \ref{Lemma 3.2}(2),
    one can see that
    \begin{eqnarray}
           q^{\prime} = \overline{p}+p(p\overline{q})^{\dag}
    \end{eqnarray}
    is a projection and
    \begin{eqnarray}
           \overline{p}R+\overline{q}R = q^{\prime}R.
    \end{eqnarray}
    Whence $p\overline{q^{\prime}} = p[p-p(p\overline{q})^{\dag}] = p-p(p\overline{q})^{\dag} = \overline{q^{\prime}}$
    and $(p\overline{q^{\prime}})^{\dag} = \overline{q^{\prime}}$.
    Replacing $q$ by $q^{\prime}$ in Lemma \ref{Lemma 3.2}(3),
    we can see that $p-p(p\overline{q^{\prime}})^{\dag}$ is projection and
    \begin{eqnarray}
          (p-p(p\overline{q^{\prime}})^{\dag})R = pR\cap q^{\prime}R = pR\cap (\overline{p}R+\overline{q}R).
    \end{eqnarray}
    Let $y^{\prime} = p-p(p\overline{q^{\prime}})^{\dag}$, then
    $y^{\prime} = p-\overline{q^{\prime}} = p(p\overline{q})^{\dag}
                = p\overline{q}(p\overline{q}p)^{\dag} = (p\overline{q}p)^{\dag}p\overline{q}p.$
    According to Lemma \ref{Lemma 3.2}(1),
    $(\overline{q}p)^{\dag} = [(p\overline{q})^{\dag}]^{*} = (p\overline{q}p)^{\dag}\overline{q}$.
    This implies
    $(\overline{q}p)^{\dag}(\overline{q}p) = (p\overline{q}p)^{\dag}\overline{q}p = (p\overline{q}p)^{\dag}p\overline{q}p$
    and hence
    $(\overline{q}p)^{\dag}R = (\overline{q}p)^{\dag}(\overline{q}p)R = (p\overline{q}p)^{\dag}p\overline{q}pR
                                 = y^{\prime}R = pR\cap (\overline{p}R+\overline{q}R).$

(3) Note that $1-pq\in R^{\dag}$ is equivalent to $\overline{p}q\in
R^{\dag}$ (see Lemma \ref{corollary 2.5} (1)$\Leftrightarrow$(6)).
    Replacing $p$ and $q$ by $\overline{p}$ and $\overline{q}$ respectively in Lemma \ref{Lemma 3.2}(3),
    we obtain that
    \begin{eqnarray}
    p^{\prime} = \overline{p}-\overline{p}(\overline{p}q)^{\dag}
    \end{eqnarray}
    is a projection and
    \begin{eqnarray}
          \overline{p}R\cap\overline{q}R = p^{\prime}R.
    \end{eqnarray}
    By Lemma \ref{Lemma 3.1} and Lemma \ref{Lemma 3.2}(1),
    \begin{eqnarray*}
          p^{\prime}q & = & [\overline{p}-\overline{p}(\overline{p}q)^{\dag}]q
                        =   \overline{p}q-\overline{p}q(\overline{p}q\overline{p})^{\dag}q \ \quad\quad(\mbox{see Lemma \ref{Lemma 3.2}(1)})\\
                      & = & \overline{p}q-\overline{p}q = 0,  \quad\quad\quad\quad\quad\quad\quad\quad\quad\quad(\mbox{see Lemma \ref{Lemma 3.1}})
    \end{eqnarray*}
    hence
    \begin{eqnarray}
          p^{\prime}R\bot qR.
    \end{eqnarray}
    Moreover
    $\overline{p^{\prime}}q = [p+\overline{p}(\overline{p}q)^{\dag}]q = pq+\overline{p}(\overline{p}q)^{\dag}q
                            = pq+\overline{p}q(\overline{p}q\overline{p})^{\dag}q = pq+\overline{p}q = q$.
    This implies $(\overline{p^{\prime}}q)^{\dag} = q$.
    Now, let $x^{\prime} = p^{\prime}+\overline{p^{\prime}}(\overline{p^{\prime}}q)^{\dag}$.
    Replacing $p$ by $p^{\prime}$ in Lemma \ref{Lemma 3.2}(2),
    we can see that
    $x^{\prime}R = p^{\prime}R+qR \overset{(2.6)}{=\!\!\!=\!\!\!=\!\!\!=\!\!\!=}(\overline{p}R\cap\overline{q}R)+ qR$.
    Furthermore, it follows from (2.7) that
    $x^{\prime}R = (\overline{p}R\cap\overline{q}R)+ qR = (\overline{p}R\cap\overline{q}R)\oplus^{\bot} qR$.

    On the other hand, we have
    \begin{eqnarray*}
           1-(\overline{q}p)(\overline{q}p)^{\dag} & = & 1-(\overline{q}p)(p\overline{q}p)^{\dag}\overline{q}
                                                         \quad\quad\quad\quad\quad\quad\quad(\mbox{see Lemma \ref{Lemma 3.2}(1)})            \\
                                                   & = & 1-[p\overline{q}p(p\overline{q}p)^{\dag}\overline{q}+
                                                         (1-p)\overline{q}p(p\overline{q}p)^{\dag}\overline{q}]                              \\
                                                   & = & 1-[p\overline{q}-(1-p)qp(p\overline{q}p)^{\dag}\overline{q}]
                                                         \quad\quad(\mbox{see Lemma \ref{Lemma 3.1}})                                        \\
                                                   & = & 1-[p\overline{q}-(\overline{p}q\overline{p})^{\dag}\overline{p}qp\overline{q}]
                                                         \quad\quad\quad\quad\ \ (\mbox{see Lemma \ref{Lemma 2.3}(3)})                       \\
                                                   & = & 1-[p\overline{q}+(\overline{p}q\overline{p})^{\dag}\overline{p}q\overline{p}(1-q)]  \\
                                                   & = & 1-[p\overline{q}+(\overline{p}q\overline{p})^{\dag}\overline{p}q\overline{p}-
                                                         (\overline{p}q\overline{p})^{\dag}\overline{p}q\overline{p}q]                       \\
                                                   & = & 1-[p\overline{q}+(\overline{p}q\overline{p})^{\dag}\overline{p}q\overline{p}
                                                         -\overline{p}q]\quad\quad\quad(\mbox{see Lemma \ref{Lemma 3.1}})                    \\
                                                   & = & 1-p+q-(\overline{p}q\overline{p})^{\dag}\overline{p}q\overline{p} = \overline{p}-\overline{p}(\overline{p}q)^{\dag}+q \\
                                                   & = & p^{\prime}+q = p^{\prime}+\overline{p^{\prime}}(\overline{p^{\prime}}q)^{\dag} = x^{\prime},
   \end{eqnarray*}
   from which one can get
   $[1-(\overline{q}p)(\overline{q}p)^{\dag}]R = x^{\prime}R = (\overline{p}R\cap\overline{q}R)\oplus^{\bot} qR$.
   Finally, in view of Lemma \ref{Lemma 3.3}, it follows that
   $[1-(\overline{q}p)^{\dag}]R = [1-(\overline{q}p)(\overline{q}p)^{\dag}]R = (\overline{p}R\cap\overline{q}R)\oplus^{\bot} qR$.
\end{proof}

\begin{corollary}\label{Corollary 3.5}
Let $p$ and $q$ be projections in a $*$-reducing ring $R$ such that $1-pq\in R^{\dag}$ and $pR\cap qR = {0}$, then

\emph{(1)} $1-pq\in R^{-1}$ and $(\overline{q}p)^{\dag} = (1-pq)^{-1}p\overline{q}$;

\emph{(2)} $(\overline{q}p)^{\dag}R = pR$;

\emph{(3)} $[1-(\overline{q}p)^{\dag}]R = (\overline{p}R\cap\overline{q}R)\oplus^{\bot} qR$.
\end{corollary}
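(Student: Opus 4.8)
The plan is to deduce everything from Theorem \ref{Theorem 3.4} together with the single extra hypothesis $pR\cap qR=\{0\}$. Observe first that part (3) is literally the conclusion of Theorem \ref{Theorem 3.4}(3), which needs only $1-pq\in R^{\dag}$; so (3) requires no new work. The substance of the corollary is therefore (1) and (2), and both will follow once I show that the projection attached to $pR\cap qR$ collapses. Since $1-pq\in R^{\dag}$ is equivalent to $p\overline q=p-pq\in R^{\dag}$ by Lemma \ref{corollary 2.5}(1)$\Leftrightarrow$(3), Lemma \ref{Lemma 3.2}(3) applies and produces the projection $y=p-p(p\overline q)^{\dag}$ with $yR=pR\cap qR$. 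By hypothesis $yR=\{0\}=0\cdot R$, so the uniqueness of the projection generating a right ideal (Remark \ref{Remark 1}) forces $y=0$, i.e. $p=p(p\overline q)^{\dag}$.

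Next I would translate $y=0$ into a statement about $p-a=p\overline q p$ (recall $a=pqp$). In the proof of Theorem \ref{Theorem 3.4}(2) the projection $y'=p-p(p\overline{q'})^{\dag}$ is computed to be $y'=p(p\overline q)^{\dag}=(p\overline q p)^{\dag}p\overline q p=(p-a)(p-a)^{\dag}$, and one has $(\overline q p)^{\dag}R=y'R$. Comparing this with the formula for $y$ gives the clean identity $y+y'=p$, so $y=0$ yields $y'=p$; equivalently $(p-a)(p-a)^{\dag}=p$. Part (2) is now immediate: $(\overline q p)^{\dag}R=y'R=pR$. (If one prefers a route avoiding the internal computation of Theorem \ref{Theorem 3.4}(2), one can instead feed $p=p(p\overline q)^{\dag}$, hence $(p-a)R=pR$, into Lemma \ref{Lemma 3.2}(4) with $p,q$ replaced by $\overline p,\overline q$ to obtain $\overline pR+\overline qR=R$, so that the intersection $pR\cap(\overline pR+\overline qR)$ of Theorem \ref{Theorem 3.4}(2) reduces to $pR$.)

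For part (1) I would invoke Lemma \ref{corollary 2.5}(iii), which gives
\[
(1-pq)(1-pq)^{\dag}=(1-pq)^{\dag}(1-pq)=(p-a)(p-a)^{\dag}+1-p .
\]
Substituting $(p-a)(p-a)^{\dag}=p$ makes the right-hand side equal to $p+(1-p)=1$. Thus $(1-pq)^{\dag}$ is a two-sided inverse of $1-pq$, so $1-pq\in R^{-1}$ and $(1-pq)^{-1}=(1-pq)^{\dag}$. Combining this with the formula $(\overline q p)^{\dag}=(1-pq)^{\dag}p\overline q$ from Theorem \ref{Theorem 3.4}(1) gives $(\overline q p)^{\dag}=(1-pq)^{-1}p\overline q$, completing (1).

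The only genuinely delicate point is the bookkeeping in the second paragraph: I must match the two projections arising from different sources — $y$ from Lemma \ref{Lemma 3.2}(3) and $y'$ from the proof of Theorem \ref{Theorem 3.4}(2) — and verify the relation $y+y'=p$, which is precisely what lets the hypothesis $pR\cap qR=\{0\}$ act on $(p-a)(p-a)^{\dag}$. Everything else is a direct substitution into already-established identities, and no decomposition or rank argument is needed, in keeping with the purely ring-theoretic style of the paper.
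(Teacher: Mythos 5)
Your proposal is correct and follows essentially the same route as the paper: both arguments collapse the projection $p-p(p\overline q)^{\dag}$ of Lemma \ref{Lemma 3.2}(3) to zero, substitute the resulting identity $(p-a)(p-a)^{\dag}=p$ into Lemma \ref{corollary 2.5}(iii) to get $(1-pq)(1-pq)^{\dag}=1$, and then invoke Theorem \ref{Theorem 3.4} for the remaining claims. The only (immaterial) difference is in part (2), where the paper deduces $\overline pR+\overline qR=R$ directly from the invertibility of $1-pq=(1-p)q+(1-q)$ and then applies Theorem \ref{Theorem 3.4}(2), whereas you either reuse the internal identity $(\overline qp)^{\dag}R=y'R$ from that theorem's proof or, in your alternative, reach $\overline pR+\overline qR=R$ via Lemma \ref{Lemma 3.2}(4).
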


\begin{proof}
(1) By Lemma \ref{corollary 2.5} (1)$\Leftrightarrow$(3), $1-pq\in R^{\dag}$ implies $p\overline{q}\in R^{\dag}$.
    Then $[p-p(p\overline{q})^{\dag}]R = pR\cap qR = 0$ by Lemma \ref{Lemma 3.2}(3).
    According to Lemma \ref{Lemma 3.2}(1), we have $p\overline{q}(p\overline{q}p)^{\dag} = p(p\overline{q})^{\dag} = p$.
    Combining Lemma \ref{corollary 2.5}(ii) and Lemma \ref{Lemma 2.3}(1), one can see that
    \begin{eqnarray*}
           (1-pq)(1-pq)^{\dag} & = & (1-pq)^{\dag}(1-pq)= (p-a)(p-a)^{\dag}+1-p \\
                               & = & p\overline{q}(p\overline{q}p)^{\dag}+1-p = p+1-p = 1.
    \end{eqnarray*}
    Hence $1-pq\in R^{-1}$ and $(\overline{q}p)^{\dag} = (1-pq)^{-1}p\overline{q}$.

(2) Since $(1-p)q+1-q = 1-pq\in R^{-1}$ by (1), it follows that $R = (1-pq)R \subseteq (1-p)R+(1-q)R$
    and hence $R = (1-p)R+(1-q)R$.
    Now $(\overline{q}p)^{\dag}R = pR$ follows by Theorem \ref{Theorem 3.4}(2).

(3) By Theorem \ref{Theorem 3.4}(3).
\end{proof}

\begin{corollary}\label{Corollary 3.6}
Let $p$ and $q$ be projections in a $*$-reducing ring $R$ such that $1-pq\in R^{\dag}$ and $pR+qR = R$, then

\emph{(1)} $(\overline{q}p)^{\dag}R = pR\cap (\overline{p}R+\overline{q}R)$;

\emph{(2)} $[1-(\overline{q}p)^{\dag}]R = qR$.
\end{corollary}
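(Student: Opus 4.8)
The plan is to treat the two parts asymmetrically, since part (1) requires no new work. Indeed, the statement in (1) is word-for-word Theorem \ref{Theorem 3.4}(2), so it holds immediately from that theorem; the hypothesis $pR+qR=R$ plays no role here. All the content therefore lies in part (2), and the guiding principle is that (2) is the exact dual of Corollary \ref{Corollary 3.5}(2): there the extra hypothesis $pR\cap qR=0$ was used to collapse the intersection appearing in part (2), whereas here the extra hypothesis $pR+qR=R$ should collapse the intersection $\overline{p}R\cap\overline{q}R$ appearing in part (3).

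Concretely, Theorem \ref{Theorem 3.4}(3) already gives $[1-(\overline{q}p)^{\dag}]R = (\overline{p}R\cap\overline{q}R)\oplus^{\bot} qR$, so it suffices to prove that $\overline{p}R\cap\overline{q}R = 0$; substituting this into the decomposition then yields $[1-(\overline{q}p)^{\dag}]R = qR$, which is (2). First I would record that $1-pq\in R^{\dag}$ forces $\overline{p}q\in R^{\dag}$ and $\overline{p}q\overline{p}\in R^{\dag}$ by Lemma \ref{corollary 2.5}((1)$\Leftrightarrow$(6)$\Leftrightarrow$(7)), so the Moore--Penrose machinery is available. I would then reuse the projection $p' = \overline{p}-\overline{p}(\overline{p}q)^{\dag}$ constructed in the proof of Theorem \ref{Theorem 3.4}(3) (equations (2.5)--(2.6)), which satisfies $\overline{p}R\cap\overline{q}R = p'R$. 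Since $p'$ is a projection, showing $\overline{p}R\cap\overline{q}R = 0$ is equivalent to showing $p' = 0$.

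The key step is to exploit $pR+qR=R$ through Lemma \ref{Lemma 3.2}(4): with $\overline{p}q\in R^{\dag}$ in hand, the hypothesis $pR+qR=R$ yields $\overline{p}q\overline{p}R=\overline{p}R$, and the forward direction of that lemma in fact produces the sharper identity $\overline{p}q\overline{p}(\overline{p}q\overline{p})^{\dag} = 1-p$. (Alternatively, one may argue that $\overline{p}q\overline{p}(\overline{p}q\overline{p})^{\dag}$ is a projection generating $\overline{p}q\overline{p}R = \overline{p}R$, so it must equal $\overline{p}$ by the uniqueness recorded in Remark \ref{Remark 1}.) Combining this with Lemma \ref{Lemma 3.2}(1) exactly as in the proof of Lemma \ref{Lemma 3.2}(2), where $\overline{p}(\overline{p}q)^{\dag} = \overline{p}q(\overline{p}q\overline{p})^{\dag} = \overline{p}q\overline{p}(\overline{p}q\overline{p})^{\dag}$, I obtain $\overline{p}(\overline{p}q)^{\dag} = 1-p = \overline{p}$ and hence $p' = \overline{p}-\overline{p} = 0$.

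Finally, $\overline{p}R\cap\overline{q}R = p'R = 0$ turns the orthogonal decomposition of Theorem \ref{Theorem 3.4}(3) into $[1-(\overline{q}p)^{\dag}]R = qR$, completing (2). The one place to be careful --- and the step I expect to be the genuine obstacle --- is precisely the passage $pR+qR=R \Rightarrow \overline{p}R\cap\overline{q}R = 0$: the tempting shortcut of arguing by ``orthogonal complements'' (writing $1 = pr_1+qr_2$ and testing against an element of $\overline{p}R\cap\overline{q}R$) breaks down in a noncommutative ring with no rank or inner-product tools, so the argument must be routed through the identity $\overline{p}q\overline{p}(\overline{p}q\overline{p})^{\dag}=1-p$ supplied by Lemma \ref{Lemma 3.2}(4) rather than attempted directly.
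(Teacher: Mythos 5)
Your proposal is correct and follows the paper's own strategy: part (1) is quoted verbatim from Theorem \ref{Theorem 3.4}(2), and part (2) is reduced, via Lemma \ref{corollary 2.5}((1)$\Leftrightarrow$(6)), Lemma \ref{Lemma 3.2}(4) and Theorem \ref{Theorem 3.4}(3), to proving $\overline{p}R\cap\overline{q}R=\{0\}$. The only divergence is the mechanism for that last step: the paper writes $\overline{p}=\overline{p}q\overline{p}r$, takes adjoints to get $\overline{p}=r^{*}\overline{p}q\overline{p}$, and kills any $x\in\overline{p}R\cap\overline{q}R$ by $x=\overline{p}x=r^{*}\overline{p}q\overline{p}x=r^{*}\overline{p}qx=0$, whereas you derive $\overline{p}q\overline{p}(\overline{p}q\overline{p})^{\dag}=\overline{p}$ (via Remark \ref{Remark 1} or the proof of Lemma \ref{Lemma 3.2}(4)) and conclude that the generating projection $p^{\prime}=\overline{p}-\overline{p}(\overline{p}q)^{\dag}$ vanishes; both routes are valid.

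One correction to your closing remark: the ``tempting shortcut'' you dismiss does not break down --- it works, and is simpler than either argument. Writing $1=pr_{1}+qr_{2}$ (which is exactly the hypothesis $pR+qR=R$) and taking $x\in\overline{p}R\cap\overline{q}R$, so that $px=qx=0$, one gets $x^{*}x=x^{*}(pr_{1}+qr_{2})x=(px)^{*}r_{1}x+(qx)^{*}r_{2}x=0$, whence $x=0$ precisely because $R$ is $*$-reducing. The $*$-reducing hypothesis is the ``inner-product tool'' you thought was unavailable, and it is assumed throughout; this shortcut would even let one bypass Lemma \ref{Lemma 3.2}(4) entirely in part (2).
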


\begin{proof}
(1) See Theorem \ref{Theorem 3.4}(2).

(2) Since $1-pq\in R^{\dag}$, it follows that $(1-p)q\in R^{\dag}$ by Lemma \ref{corollary 2.5}(1)$\Leftrightarrow$(6).
    Moreover, $pR+qR = R$ implies $\overline{p}q\overline{p}R = \overline{p}R$ by Lemma \ref{Lemma 3.2}(4).
    Hence $\overline{p} = \overline{p}q\overline{p}r$ for some $r\in R$.
    Consequently, $\overline{p}q\overline{p}r = \overline{p} = \overline{p}^{*} = r^{*}\overline{p}q\overline{p}$.
    Now, for any $x\in \overline{p}R\cap \overline{q}R$, we have $x = \overline{p}x = \overline{q}x$
    and hence $x = \overline{p}x = \overline{p}q\overline{p}rx = r^{*}\overline{p}q\overline{p}x = 0$.
    Thus $\overline{p}R\cap \overline{q}R = {0}$.
    Therefore the result follow by Theorem \ref{Theorem 3.4}(3).
\end{proof}

\begin{remark}\label{Remark 3.8}
\emph{Let $p$ and $q$ be projections in a $*$-reducing ring $R$ such that $1-pq\in R^{\dag}$,
then $pR\cap qR={0}$ $\Leftrightarrow$ $1-pq\in R^{-1}$.
Indeed, $1-pq\in R^{-1}$ implies $R = (1-pq)R$.
In addition, $R = (1-pq)R \subseteq (1-p)R+(1-q)R$ since $1-pq = (1-p)q + 1 - q$.
Hence $R = (1-p)R+(1-q)R$.
Now, by the proof of Corollary \textsc{\ref{Corollary 3.6}}(2) one can see that $pR\cap qR = \{0\}$.
Conversely, when $pR\cap qR = \{0\}$ we have $1-pq\in R^{-1}$ by Corollary \textsc{\ref{Corollary 3.5}}(1).}
\end{remark}

\begin{theorem}\label{Theorem 3.9}
Let $p$ and $q$ be projections in a $*$-reducing ring $R$ such that $1-qp\in R^{\dag}$, then

\emph{(1)} $(1-qp)^{\dag}\overline{q}$ and $p(p+q-qp)^{\dag}$ are idempotents;

\emph{(2)} $[(1-qp)^{\dag}\overline{q}]R = [pR\cap (\overline{p}R+\overline{q}R)]\oplus^{\bot} (\overline{p}R\cap\overline{q}R)$;

\emph{(3)} $[1-(1-qp)^{\dag}\overline{q}]R = qR$;

\emph{(4)} $[p(p+q-qp)^{\dag}]R = pR$;

\emph{(5)} $[1-p(p+q-qp)^{\dag}]R = [(\overline{p}R+\overline{q}R)\cap qR]\oplus^{\bot} (\overline{p}R\cap\overline{q}R)$.
\end{theorem}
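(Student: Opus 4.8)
The statement splits into two idempotents: $f:=(1-qp)^{\dag}\overline q$, governing (1)--(3), and $h:=p(p+q-qp)^{\dag}$, governing (1),(4),(5). Throughout write $s:=p+q-qp=p+q\overline p$, so that $s^{*}=p+\overline p q$ and $s=1-\overline q\,\overline p$. By Lemma \ref{corollary 2.5} the hypothesis $1-qp\in R^{\dag}$ is equivalent to $1-pq\in R^{\dag}$ and to $p+q-qp\in R^{\dag}$, so every object below exists; crucially, all of Theorem \ref{Theorem 3.4} and Corollaries \ref{Corollary 3.5}, \ref{Corollary 3.6} may be reused after interchanging $p$ and $q$, and the plan is to reduce the $h$-part entirely to this interchanged form.

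First I would record two identities, from which (1), (3) and (4) follow at once. Applying Lemma \ref{corollary 2.5}(ii) with $p,q$ interchanged gives $(1-qp)^{\dag}=(q\overline p q)^{\dag}(1+qp\overline q)+\overline q$; since $q\overline p q\in qRq$ its Moore--Penrose inverse again lies in $qRq$, whence $\overline q(q\overline p q)^{\dag}=0$ and therefore $\overline q\,(1-qp)^{\dag}=\overline q$. Secondly, $s=p+q\overline p$ gives $sp=p$ and $ps^{*}=p$, and I claim $s^{\dag}p=p$: if $sx=0$ then, putting $w:=\overline p x$ and left-multiplying $px+qw=0$ by $\overline p$, one gets $\overline p qw=0$, so $qw=pqw$ and hence $(qw)^{*}(qw)=w^{*}qw=(w^{*}p)(qw)=0$ because $w^{*}p=x^{*}\overline p p=0$; as $R$ is $*$-reducing, $qw=0$ and thus $px=0$, so $\ker s\subseteq\ker p$, forcing $p(1-s^{\dag}s)=0$, and taking adjoints (both factors are projections) yields $(1-s^{\dag}s)p=0$, i.e. $s^{\dag}p=s^{\dag}sp=p$. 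Now from $\overline q(1-qp)^{\dag}=\overline q$ one gets $f^{2}=(1-qp)^{\dag}\overline q(1-qp)^{\dag}\overline q=(1-qp)^{\dag}\overline q=f$ and $\overline q f=\overline q$; together with $fq=(1-qp)^{\dag}\overline q q=0$ this gives $(1-f)R\subseteq qR$ and $qR\subseteq(1-f)R$, i.e. (3). From $s^{\dag}p=p$ one gets $ph=h$, $hp=ps^{\dag}p=p$ and $h^{2}=(ps^{\dag}p)s^{\dag}=h$, so $h$ is idempotent with $hR=pR$, i.e. (4); and $f^2=f$ gives (1).

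The remaining content is the two orthogonal decompositions. For (2) I would introduce the companion idempotent $g:=(\overline p q)^{\dag}=(1-qp)^{\dag}q\overline p$, for which Theorem \ref{Theorem 3.4}(1),(2) (interchanged) give $gR=qR\cap(\overline q R+\overline p R)$. Since $\overline q+q\overline p=1-qp$, one has $f+g=(1-qp)^{\dag}(1-qp)$, a projection equal to $(1-qp)(1-qp)^{\dag}$ by Lemma \ref{corollary 2.5}(iii); and $g=q(\overline p q\overline p)^{\dag}\in qR$ by Lemma \ref{Lemma 3.2}(1) gives $\overline q g=0$, hence $fg=0$ and so $gf=0$. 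Thus, by Lemma \ref{Lemma 3.3}, $fR$ is the complement of $gR$ inside $(1-qp)R$, and it remains to match this complement with $[pR\cap(\overline pR+\overline qR)]\oplus^{\bot}(\overline pR\cap\overline qR)$, whose summands are the principal ideals produced by Theorem \ref{Theorem 3.4}(2) and Lemma \ref{Lemma 3.2}(2),(3). Part (5), together with a second route to (4), I would deduce from the single operator identity
\[
p(p+q-qp)^{\dag}+(1-pq)^{\dag}\overline p=1 ,
\]
since its second summand is exactly $f$ with $p,q$ interchanged, so that $(1-h)R$ is computed by (2) in interchanged form, which is precisely the right-hand side of (5).

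The difficulty is concentrated in the last paragraph. Proving the displayed identity is the crux: I would substitute the explicit formula of Lemma \ref{corollary 2.5}(ii) for $(1-pq)^{\dag}$ and, with $p,q$ replaced by $\overline q,\overline p$, for $(p+q-qp)^{\dag}=(1-\overline q\,\overline p)^{\dag}$, and then collapse terms using the corner relations $(p\overline q p)^{\dag}\in pRp$ and $(\overline q p\overline q)^{\dag}\in\overline q R\overline q$ together with Lemmas \ref{Lemma 2.2}, \ref{Lemma 2.3}(3) and \ref{Lemma 3.1}. The range matching in (2) is then carried out as in the proof of Theorem \ref{Theorem 3.4}(3): one telescopes the projection onto $fR$ using $bd^{\dag}=(p-a)^{\dag}b$ and Lemma \ref{Lemma 3.1}, and the orthogonality $\oplus^{\bot}$ is read off once the two summands appear as $y'R$ and $p'R$ with $p'^{*}y'=0$, in the notation of that proof. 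I expect this bookkeeping, rather than any conceptual point, to be the main obstacle.
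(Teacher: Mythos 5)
Your treatment of (1), (3) and (4) is correct and genuinely different from (and slicker than) the paper's. The identity $\overline{q}\,(1-qp)^{\dag}=\overline{q}$, obtained from Lemma \ref{corollary 2.5}(ii) with $p,q$ interchanged together with the corner fact that $c\in qRq\cap R^{\dag}$ implies $c^{\dag}\in qRq$ (true, but state and prove it: check that $qc^{\dag}q$ satisfies the four Penrose equations and invoke uniqueness), immediately gives $f^{2}=f$, and combined with $fq=0$ it yields both inclusions in (3). Your kernel argument --- $sx=0$ forces $px=0$ in a $*$-reducing ring, hence $p(1-s^{\dag}s)=0$, hence $s^{\dag}p=p$, hence $ph=h$, $hp=p$, $h^{2}=h$, $hR=pR$ --- is a clean proof of (4). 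The paper instead proves (3) and (4) by explicitly constructing the Moore--Penrose inverses $z=1-qp-dd^{\dag}$ of $(1-qp)^{\dag}\overline{q}$ and $z'=2p-qp-(p-a)(p-a)^{\dag}$ of $p(p+q-qp)^{\dag}$ and then applying Lemma \ref{Lemma 3.3}; your route avoids those computations entirely. Your displayed identity $p(p+q-qp)^{\dag}+(1-pq)^{\dag}\overline{p}=1$ is also true: in the paper's notation, Lemma \ref{corollary 2.5}(ii) and the corner fact give $(1-pq)^{\dag}\overline{p}=(p-a)^{\dag}b+\overline{p}$, while (2.10) and Lemma \ref{Lemma 2.3}(3) give $p(p+q-qp)^{\dag}=p-(p-a)^{\dag}b$. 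It reduces (5) to (2) with $p$ and $q$ interchanged --- a structural shortcut the paper does not exploit (it proves (5) by a separate long computation with $y_{1}$, $p'$ and $x_{2}$).

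The genuine gap is part (2), on which your part (5) now also depends. Everything you establish there is correct --- $f+g=(1-qp)^{\dag}(1-qp)$, $fg=0$ and hence $gf=0$ (from idempotency of $f+g$), and $gR=qR\cap(\overline{q}R+\overline{p}R)$ by Theorem \ref{Theorem 3.4}(2) interchanged --- but it cannot yield (2): knowing that $fR$ is a direct complement of $gR$ in $(1-qp)R$ does not determine $fR$, since direct complements are not unique. The actual content of (2) is the identification of $fR$ with $y'R+p'R$, where $y'=p\overline{q}(p\overline{q}p)^{\dag}$ and $p'=\overline{p}-\overline{p}(\overline{p}q)^{\dag}$, and that identification is exactly the two-sided ``telescoping'' computation --- exhibiting each of $x_{1}=y'+p'$ and $(1-qp)^{\dag}\overline{q}$ as a left multiple of the other via the expansion $(1-qp)^{\dag}\overline{q}=(p-a)^{\dag}(p-a)-(p-a)^{\dag}b+1-p-dd^{\dag}$ and Lemmas \ref{Lemma 2.2}, \ref{Lemma 2.3}(2)(3), \ref{Lemma 3.1} --- which you defer with ``carried out as in the proof of Theorem \ref{Theorem 3.4}(3).'' Note that (2) is not a substitution instance of Theorem \ref{Theorem 3.4}(3) (indeed, by Theorem \ref{Theorem 3.11}(1) the elements $(\overline{q}p)^{\dag}$ and $(1-qp)^{\dag}\overline{q}$ coincide only when $pR+qR=R$), so the appeal is to the method, not the result, and the work remains to be done. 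As it stands, your proposal proves (1), (3), (4) outright, proves (5) conditionally on (2), and leaves (2) --- the technical heart of the theorem --- unproved.
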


\begin{proof}
Since $1-qp\in R^{\dag}$, we have $p(1-q)$, $(1-p)q\in R^{\dag}$ by Lemma \ref{corollary 2.5}(3)$\Leftrightarrow$(5)$\Leftrightarrow$(6).

(1) By Lemma \ref{corollary 2.5}(iii) and Lemma \ref{Lemma 2.3}(1), it follows that
    \begin{eqnarray}
          (1-qp)^{\dag} = [(1-pq)^{\dag}]^{*} = (p-a)^{\dag}+b^{*}(p-a)^{\dag}+1-p.
    \end{eqnarray}
    Note that
   $(p-a)^{\dag}\overline{q} = (p-a)^{\dag}\overline{q}(p+1-p) = (p-a)^{\dag}(p-a)-(p-a)^{\dag}b$
   and
    \begin{eqnarray*}
          b^{*}(p-a)^{\dag}\overline{q} & = &  d^{\dag}b^{*}\overline{q} \quad\quad\quad\quad\quad\quad\quad\ (\mbox{see Lemma \ref{Lemma 2.3}(3)}\\
                                        & = & -d^{\dag}\overline{p}q(1-p)(1-q)\\
                                        & = & -d^{\dag}d +d^{\dag}dq  \\
                                        & = & -d^{\dag}d+(1-p)q. \quad\quad\ \ (\mbox{see Lemma \ref{Lemma 3.1}})
    \end{eqnarray*}
    Hence
    \begin{eqnarray}
           (1-qp)^{\dag}\overline{q} & = & (p-a)^{\dag}\overline{q}+b^{*}(p-a)^{\dag}\overline{q}+(1-p)\overline{q} \nonumber\\
                                     & = & (p-a)^{\dag}(p-a)-(p-a)^{\dag}b-dd^{\dag}+(1-p)q+(1-p)(1-q)\nonumber\\
                                     & = & (p-a)^{\dag}(p-a)-(p-a)^{\dag}b+1-p-dd^{\dag}.
    \end{eqnarray}
    Now, one can verify that
    $[(1-qp)^{\dag}\overline{q}]^{2} = (1-qp)^{\dag}\overline{q}$ by Lemma \ref{Lemma 2.3}(3).

    On the other hand, $(1-p)q\in R^{\dag}$ implies $p+q-qp\in R^{\dag}$ by Lemma \ref{corollary 2.5}(6)$\Leftrightarrow$(8).
    Replacing $p$ and $q$ by $1-p$ and $1-q$ respectively in (2.8),
    one can get
    \begin{eqnarray*}
          (p+q-qp)^{\dag}  & = & (\overline{p}q\overline{p})^{\dag}+p(1-q)(1-p)(\overline{p}q\overline{p})^{\dag}+p
                             = (\overline{p}q\overline{p})^{\dag} - pq\overline{p}(\overline{p}q\overline{p})^{\dag}+p,
    \end{eqnarray*}
    and hence
    \begin{eqnarray}
           p(p+q-qp)^{\dag} = p- pq\overline{p}(\overline{p}q\overline{p})^{\dag}.
    \end{eqnarray}
    Whence, it is easy to check that $p(p+q-qp)^{\dag}$ an idempotent.

(2) By (2.4) and (2.6) we can see that
    $pR\cap (\overline{p}R+\overline{q}R) = y^{\prime}R = p\overline{q}(p\overline{q}p)^{\dag}R$ and
    $\overline{p}R\cap\overline{q}R = p^{\prime}R = [\overline{p}-\overline{p}(\overline{p}q)^{\dag}]R$,
    where
    $y^{\prime} = p\overline{q}(p\overline{q}p)^{\dag}$,
    $p^{\prime} = \overline{p}-\overline{p}(\overline{p}q)^{\dag}$.
    Hence $(p^{\prime})^*y^{\prime} = 0$.
    Thereby
    \begin{eqnarray}
          [pR\cap (\overline{p}R+\overline{q}R)]\bot\, [\overline{p}R\cap\overline{q}R].
    \end{eqnarray}
    Since
    $\overline{y^{\prime}}p^{\prime} = [1-p\overline{q}(p\overline{q}p)^{\dag}][\overline{p}-\overline{p}(\overline{p}q)^{\dag}]
                                     = \overline{p}-\overline{p}(\overline{p}q)^{\dag}
                                     = p^{\prime}$,
    we have $(\overline{y^{\prime}}p^{\prime})^{\dag} = p^{\prime}$.
    Let $x_{1} = y^{\prime}+\overline{y^{\prime}}(\overline{y^{\prime}}p^{\prime})^{\dag}$.
    Replacing $p$ and $q$ by $y^{\prime}$ and $p^{\prime}$ respectively in Lemma \ref{Lemma 3.2}(2),
    we have
    \begin{eqnarray}
           x_{1}R = y^{\prime}R+p^{\prime}R
                  = [pR\cap (\overline{p}R+\overline{q}R)]+ [\overline{p}R\cap\overline{q}R].
    \end{eqnarray}
    Note that
    \begin{eqnarray}
          x_{1} = y^{\prime}+\overline{y^{\prime}}(\overline{y^{\prime}}p^{\prime})^{\dag}
                = y^{\prime}+p^{\prime} = p\overline{q}(p\overline{q}p)^{\dag}+\overline{p}-\overline{p}(\overline{p}q)^{\dag}
                = p\overline{q}(p\overline{q}p)^{\dag}+\overline{p}-\overline{p}q(\overline{p}q\overline{p})^{\dag}.
    \end{eqnarray}
    By Lemma \ref{Lemma 2.2}(1) and Lemma \ref{Lemma 2.3}(2)(3), one can see that
    \begin{eqnarray*}
           x_{1} & \overset{(2.13)}{=\!\!\!=\!\!\!=\!\!\!=} & p\overline{q}(p\overline{q}p)^{\dag}+\overline{p}-\overline{p}q(\overline{p}q\overline{p})^{\dag}\\
                 & =                                        & [(p-a)^{\dag}(p-a)-(p-a)^{\dag}b+1-p-dd^{\dag}][(p-a)-b^{*}+1-p-dd^{\dag}]\\
                 & \overset{(2.9)}{=\!\!\!=\!\!\!=\!\!\!=} & (1-qp)^{\dag}\overline{q}[(p-a)-b^{*}+1-p-dd^{\dag}]
    \end{eqnarray*}
    and
    \begin{eqnarray*}
          (1-qp)^{\dag}\overline{q} & \overset{(2.9)}{=\!\!\!=\!\!\!=\!\!\!=} &
                                       (p-a)^{\dag}(p-a)-(p-a)^{\dag}b+1-p-dd^{\dag} \\
                                    & =                                        &
                                       [(p-a)(p-a)^{\dag}+\overline{p}-dd^{\dag}][(p-a)(p-a)^{\dag}-(p-a)^{\dag}b+\overline{p}-dd^{\dag}]\\
                                    & \overset{(2.13)}{=\!\!\!=\!\!\!=\!\!\!=} &
                                       x_{1}[(p-a)(p-a)^{\dag}-(p-a)^{\dag}b+\overline{p}-dd^{\dag}].
    \end{eqnarray*}
    Whence $x_{1}R = (1-qp)^{\dag}\overline{q}R$.
    In addition, (2.11) and (2.12) imply
    $$x_{1}R = [pR\cap (\overline{p}R+\overline{q}R)]+[\overline{p}R\cap\overline{q}R]
            = [pR\cap (\overline{p}R+\overline{q}R)]\oplus^{\bot} [\overline{p}R\cap\overline{q}R].$$
    Thus
    $$(1-qp)^{\dag}\overline{q}R = [pR\cap (\overline{p}R+\overline{q}R)]\oplus^{\bot} [\overline{p}R\cap\overline{q}R].$$

(3) Since $(1-p)q\in R^{\dag}$ we have $(1-p)q(1-p)\in R^{\dag}$ by Lemma \ref{corollary 2.5}(6)$\Leftrightarrow$(7).
    Let $z = 1-qp-dd^{\dag}$.
    We claim that $[(1-qp)^{\dag}\overline{q}]^{\dag} = z$.

    Indeed, note that
    \begin{eqnarray*}
               &   &  [(1-qp)^{\dag}\overline{q}]z                                                   \\
               & \overset{(2.9)}{=\!\!\!=\!\!\!=\!\!\!=} &
                   [(p-a)^{\dag}(p-a)-(p-a)^{\dag}b+1-p-dd^{\dag}][1-qp-dd^{\dag}]                 \\
               & = & (p-a)^{\dag}(p-a)(1-qp)-(p-a)^{\dag}b(1-qp)+(1-p)(1-qp)                  \\
               &   & -dd^{\dag}(1-qp)+(p-a)^{\dag}b  \quad\quad\quad\quad\quad\  (\mbox{see Lemma \ref{Lemma 2.3}(2)})    \\
               & = & (p-a)+(p-a)^{\dag}bqp+1-p-(1-p)qp-dd^{\dag}+dd^{\dag}qp                         \\
               & = & (p-a)+(p-a)^{\dag}bb^*+1-p-dd^{\dag}  \quad (\mbox{see Lemma \ref{Lemma 2.3}(2)})   \\
               & = & (p-a)^{\dag}(p-a)+1-p-dd^{\dag}, \quad \quad\quad\ \ \! (\mbox{see Lemma \ref{Lemma 2.2}(1)})
    \end{eqnarray*}
    where $[(p-a)^{\dag}(p-a)]^* = (p-a)^{\dag}(p-a)$, $(1-p)^* = 1-p$ and $(dd^{\dag})^* = dd^{\dag}$.
    Hence $[(1-qp)^{\dag}\overline{q}z]^* = (1-qp)^{\dag}\overline{q}z$ and
    \begin{eqnarray*}
                 &   &  [(1-qp)^{\dag}\overline{q}]z [(1-qp)^{\dag}\overline{q}]                      \\
                 & \overset{(2.9)}{=\!\!\!=\!\!\!=\!\!\!=} &
                   [(p-a)^{\dag}(p-a)+1-p-dd^{\dag}][(p-a)^{\dag}(p-a)-(p-a)^{\dag}b+1-p-dd^{\dag}]  \\
                 & = & (p-a)^{\dag}(p-a)-(p-a)^{\dag}b+1-p-dd^{\dag}                                   \\
                 & \overset{(2.9)}{=\!\!\!=\!\!\!=\!\!\!=} & (1-qp)^{\dag}\overline{q}.
    \end{eqnarray*}
    Similarly, it follows from
    \begin{eqnarray}
                &   &  z[(1-qp)^{\dag}\overline{q}]                                                   \nonumber\\
                & \overset{(2.9)}{=\!\!\!=\!\!\!=\!\!\!=} &
                   [1-qp-dd^{\dag}][(p-a)^{\dag}(p-a)-(p-a)^{\dag}b+1-p-dd^{\dag}]                    \nonumber\\
                & = & (1-qp)p(p-a)^{\dag}(p-a)-(1-qp)p(p-a)^{\dag}b+1-p-dd^{\dag}                       \nonumber\\
                & = & (1-q)p-(1-q)ppq(1-p)d^{\dag}+1-p-dd^{\dag} \quad\quad (\mbox{see Lemma \ref{Lemma 3.1} and \ref{Lemma 2.3}(3)})\nonumber\\
                & = & (1-q)p+(1-q)(1-p)q(1-p)d^{\dag}+1-p-dd^{\dag}                                    \nonumber\\
                & = & (1-q)p-q(1-p)q(1-p)d^{\dag}+1-p                                   \nonumber\\
                & = & (1-q)p-q(1-p)+1-p    \quad\quad \quad\quad \quad\quad\quad\quad\quad\ \  (\mbox{see Lemma \ref{Lemma 3.1}})    \nonumber\\
                & = & 1-q
    \end{eqnarray}
    that $[z(1-qp)^{\dag}\overline{q}]^* = z(1-qp)^{\dag}\overline{q}$ and
    \begin{eqnarray*}
                 z[(1-qp)^{\dag}\overline{q}]z & = & (1-q)[1-qp-dd^{\dag}]            \\
                                         & = & 1-q-dd^{\dag}+qdd^{\dag}                     \\
                                         & = & 1-q-dd^{\dag}+q(1-p)\quad\quad (\mbox{see Lemma \ref{Lemma 3.1}})      \\
                                         & = & 1-qp-dd^{\dag} = z.
    \end{eqnarray*}

    Now, by (2.14) we have
    \begin{eqnarray*}
                 1-[(1-qp)^{\dag}\overline{q}]^{\dag}[(1-qp)^{\dag}\overline{q}] & = & 1-z[(1-qp)^{\dag}\overline{q}] = 1-(1-q) = q.
    \end{eqnarray*}
    Whence $[1-(1-qp)^{\dag}\overline{q}]R = \{1-[(1-qp)^{\dag}\overline{q}]^{\dag}[(1-qp)^{\dag}\overline{q}]\}R = qR$
    by Lemma \ref{Lemma 3.3}.

(4) First, $p(1-q)\in R^{\dag}$ implies $p-pqp\in R^{\dag}$ by Lemma \ref{corollary 2.5}(2)$\Leftrightarrow$(3).
    Let
    \begin{eqnarray*}
           z^{\prime} = 2p-qp-(p-a)(p-a)^{\dag} = p+p-a-b^{*}-(p-a)(p-a)^{\dag}.
    \end{eqnarray*}
    We claim $z^{\prime}$ is the MP inverse of $p(p+q-qp)^{\dag}$.

    Indeed, it follows that
    \begin{eqnarray}
              &   & p(p+q-qp)^{\dag}z^{\prime}                                                                                        \nonumber\\
              & \overset{(2.10)}{=\!\!\!=\!\!\!=\!\!\!=} &
                 [p- pq\overline{p}(\overline{p}q\overline{p})^{\dag}][p+p-a-b^{*}-(p-a)(p-a)^{\dag}]                                  \nonumber\\
              & = & p+p-a-(p-a)(p-a)^{\dag}+pq\overline{p}(\overline{p}q\overline{p})^{\dag}b^{*}                                       \nonumber\\
              & = & p+p-a-(p-a)(p-a)^{\dag}+(p-a)^{\dag}bb^{*} \quad\quad\quad\quad\quad\quad\quad\quad\  (\mbox{see Lemma \ref{Lemma 2.3}(3)})\nonumber\\
              & = & p+p-a-(p-a)(p-a)^{\dag}+(p-a)^{\dag}[p-a-(p-a)^2] \quad\quad (\mbox{see Lemma \ref{Lemma 2.2}(1)})                        \nonumber\\
              & = & p.
    \end{eqnarray}
    This guarantees $[p(p+q-qp)^{\dag}z^{\prime}]^* = p(p+q-qp)^{\dag}z^{\prime}$ and
    \begin{eqnarray*}
                 p(p+q-qp)^{\dag}z^{\prime}p(p+q-qp)^{\dag} = pp(p+q-qp)^{\dag} = p(p+q-qp)^{\dag}.
    \end{eqnarray*}
    Similarly, note that
    \begin{eqnarray}
                 &   & z^{\prime}p(p+q-qp)^{\dag}                                                                        \nonumber       \\
                 & \overset{(2.10)}{=\!\!\!=\!\!\!=\!\!\!=} &
                 [p+p-a-b^{*}-(p-a)(p-a)^{\dag}][p- pq\overline{p}(\overline{p}q\overline{p})^{\dag}]                        \nonumber         \\
                 & = & p+p-a-b^{*}-(p-a)(p-a)^{\dag}-(p-a)bd^{\dag}+b^{*}b(\overline{p}q\overline{p})^{\dag}\quad
                             (\mbox{see Lemma \ref{Lemma 2.3}(1)})   \nonumber \\
                 & = & p+p-a-b^{*}-(p-a)(p-a)^{\dag}-b +b^{*}bd^{\dag}\quad\quad\quad\quad\quad\quad\ \ (\mbox{see Lemma \ref{Lemma 2.3}(1)(3)}) \nonumber \\
                 & = & 2p-a-b^{*}-(p-a)(p-a)^{\dag}-b+dd^{\dag}-d, \  \quad\quad\quad\quad\quad\quad
                             (\mbox{see Lemma \ref{Lemma 2.2}(2)}) \nonumber \\
                 & = & 2p-q-(p-a)(p-a)^{\dag}+dd^{\dag}.
    \end{eqnarray}
    So we have $[z^{\prime}p(p+q-qp)^{\dag}]^* = z^{\prime}p(p+q-qp)^{\dag}$
    and
    \begin{eqnarray*}
              &   & z^{\prime}p(p+q-qp)^{\dag}z^{\prime}                              \\
              & = & [2p-q-(p-a)(p-a)^{\dag}+dd^{\dag}][p+p-a-b^{*}-(p-a)(p-a)^{\dag}] \\
              & = & 2p+(p-a)-2(p-a)(p-a)^{\dag}-q(p+p-a-b^*)+q(p-a)(p-a)^{\dag}-dd^{\dag}b^{*}\\
              & = & 2p+(p-a)-(p-a)(p-a)^{\dag}-(1-q)(p-a)(p-a)^{\dag}-qp-b^{*} \quad\quad (\mbox{see Lemma \ref{Lemma 2.3}(2)})\\
              & = & 2p+(p-a)-(p-a)(p-a)^{\dag}-(1-q)p-qp-b^{*} \quad\quad\quad\quad\quad\quad\quad\quad (\mbox{see Lemma \ref{Lemma 3.1}})\\
              & = & p+p-a-(p-a)(p-a)^{\dag}-b^{*} = z^{\prime}.
    \end{eqnarray*}

    Now, it follows from (2.15) that
    $[p(p+q-qp)^{\dag}][p(p+q-qp)^{\dag}]^{\dag} = [p(p+q-qp)^{\dag}]z^{\prime} = p$.
    Consequently, $$[p(p+q-qp)^{\dag}]R =[p(p+q-qp)^{\dag}][p(p+q-qp)^{\dag}]^{\dag}R =pR.$$

(5) By (2.2) we have $q^{\prime} = \overline{p}+p(p\overline{q})^{\dag}$.
    In view of Lemma \ref{Lemma 3.1} and Lemma \ref{Lemma 3.2}(1),
    one can see that
    $$q^{\prime}\overline{q} = [\overline{p}+p(p\overline{q})^{\dag}]\overline{q}
                             = (1-p)(1-q)+p\overline{q}(p\overline{q}p)^{\dag}\overline{q}
                             = (1-p)(1-q)+p\overline{q}
                             = \overline{q}.$$
    Hence $(q^{\prime}\overline{q})^{\dag} = \overline{q}$.
    Replacing $p$ by $q^{\prime}$ in Lemma \ref{Lemma 3.2}(3),
    one can see that $q^{\prime}-q^{\prime}(q^{\prime}\overline{q})^{\dag}$ is a projection and
    \begin{eqnarray}
             [q^{\prime}-q^{\prime}(q^{\prime}\overline{q})^{\dag}]R
           = q^{\prime}R\cap qR \overset{(2.3)}{=\!\!\!=\!\!\!=\!\!\!=} (\overline{p}R+\overline{q}R)\cap qR.
    \end{eqnarray}
    Let $y_{1} = q^{\prime}-q^{\prime}(q^{\prime}\overline{q})^{\dag}$, then we have
    \begin{eqnarray}
           y_{1} = \overline{p}+p(p\overline{q})^{\dag}-q^{\prime}\overline{q}
                 = \overline{p}+p(p\overline{q})^{\dag}-\overline{q} = q-p+p(p\overline{q})^{\dag}
    \end{eqnarray}
    and
    \begin{eqnarray*}
    \overline{y_{1}}p^{\prime} & = & [p-p(p\overline{q})^{\dag}+\overline{q}][\overline{p}-\overline{p}(\overline{p}q)^{\dag}]
                                        \quad\quad\ \ \quad\quad(\mbox{see (2.5),(2.18)})\\
                               & = & [p-p\overline{q}(p\overline{q}p)^{\dag}+\overline{q}][\overline{p}-\overline{p}q(\overline{p}q\overline{p})^{\dag}]
                                        \quad\quad\ \ (\mbox{see Lemma \ref{Lemma 3.2}(1)})\\
                               & = & (1-q)\overline{p}-(1-q)\overline{p}q(\overline{p}q\overline{p})^{\dag}\\
                               & = & (1-q)\overline{p}- (1-q)\overline{p}q\overline{p}(\overline{p}q\overline{p})^{\dag}\\
                               & = & (1-q)\overline{p}-(\overline{p}q\overline{p})(\overline{p}q\overline{p})^{\dag}
                                        +q(\overline{p}q\overline{p})(\overline{p}q\overline{p})^{\dag}\\
                               & = & (1-q)\overline{p}-(\overline{p}q\overline{p})(\overline{p}q\overline{p})^{\dag}+q\overline{p}
                                        \quad\quad\quad\quad\ (\mbox{see Lemma \ref{Lemma 3.1}})\\
                               & = & \overline{p}-(\overline{p}q\overline{p})(\overline{p}q\overline{p})^{\dag} \\
                               & = & p^{\prime}, \quad\quad\quad\quad\quad\quad\quad\quad\quad\quad\quad\quad\quad\quad\ \ \
                                                 (\mbox{see (2.5) and Lemma \ref{Lemma 3.2}(1)})
    \end{eqnarray*}
    hence $(\overline{y_{1}}p^{\prime})^{\dag} = p^{\prime}$.
    Replacing $p$ and $q$ by $y_{1}$ and $p^{\prime}$ respectively in Lemma \ref{Lemma 3.2}(2),
    one can see that $[y_{1}+\overline{y_{1}}(\overline{y_{1}}p^{\prime})^{\dag}]R =  y_{1}R+p^{\prime}R$.

    Next, let $x_{2} =  y_{1}+\overline{y_{1}}(\overline{y_{1}}p^{\prime})^{\dag}$.
    By (2.5), (2.18) and Lemma \ref{Lemma 3.2}(1),
    \begin{eqnarray}
           x_{2} & = &  y_{1}+\overline{y_{1}}(\overline{y_{1}}p^{\prime})^{\dag}
                   =    y_{1}+p^{\prime} \nonumber\\
                 & = &  q-p+p(p\overline{q})^{\dag}+\overline{p}-\overline{p}(\overline{p}q)^{\dag}\nonumber\\
                 & = &  q-2p+1+(p\overline{q}p)(p\overline{q}p)^{\dag}-(\overline{p}q\overline{p})(\overline{p}q\overline{p})^{\dag}.
    \end{eqnarray}
    Moreover, we have
    \begin{eqnarray*}
           (p^{\prime})^{*}y_{1} & = & [(q\overline{p})^{\dag}\overline{p}-\overline{p}][q-p+p(p\overline{q})^{\dag}]
                                        \ \ \  \ \ (\mbox{see (2.5) and (2.18)}) \\
                                 & = & (\overline{p}q\overline{p})^{\dag}\overline{p}q\overline{p}q-\overline{p}q
                                        \quad\quad\ \ \quad\quad\quad\ \ (\mbox{see Lemma \ref{Lemma 3.2}(1)})\\
                                 & = & 0,   \quad\quad\quad\quad\ \quad\quad\quad\quad\quad\quad\quad\  \ \ (\mbox{see Lemma \ref{Lemma 3.1}})
    \end{eqnarray*}
    from which it follows that $y_{1}R \bot p^{\prime}R$.
    Whence
    \begin{eqnarray}
             x_{2}R = y_{1}R\oplus^{\bot}p^{\prime}R.
    \end{eqnarray}
    On the other hand,
    \begin{eqnarray}
          &   &  1-[p(p+q-qp)^{\dag}]^{\dag}[p(p+q-qp)^{\dag}] \nonumber\\
          & = &  1-z^{\prime}[p(p+q-qp)^{\dag}]                \nonumber\\
          & \overset{(2.16)}{=\!\!\!=\!\!\!=\!\!\!=} &
                 1-[2p-q-(p-a)(p-a)^{\dag}+dd^{\dag}]             \nonumber\\
          & = &  (p\overline{q}p)(p\overline{q}p)^{\dag}+q-2p+1-(\overline{p}q\overline{p})(\overline{p}q\overline{p})^{\dag}.
    \end{eqnarray}
    According to (2.19) and (2.21), we obtain
    \begin{eqnarray}
           \{1-[p(p+q-qp)^{\dag}]^{\dag}[p(p+q-qp)^{\dag}]\}R = x_{2}R.
    \end{eqnarray}
    Finally, (2.22), (2.20), (2.17), (2.6) and Lemma \ref{Lemma 3.3} imply
    \begin{eqnarray*}
           [1-p(p+q-qp)^{\dag}]R & = & \{1-[p(p+q-qp)^{\dag}]^{\dag}[p(p+q-qp)^{\dag}]\}R          \\
                                 & = & x_{2}R = y_{1}R\oplus^{\bot}p^{\prime}R                     \\
                                 & = & [(\overline{p}R+\overline{q}R)\cap qR]\oplus^{\bot} (\overline{p}R\cap\overline{q}R).
    \end{eqnarray*}
    This completes the proof.
\end{proof}

The following corollary is a special case of Theorem \ref{Theorem 3.9}.

\begin{corollary}\label{Corollary 3.10}
Let $p$ and $q$ be projections in a $*$-reducing ring $R$ such that $pR\oplus qR = R$, then

\emph{(1)} $1-qp, p+q-qp\in R^{-1}$;

\emph{(2)} $[(1-qp)^{-1}\overline{q}]R = [p(p+q-qp)^{-1}]R = pR$;

\emph{(3)} $[1-(1-qp)^{-1}\overline{q}]R = [1-p(p+q-qp)^{-1}]R = qR$.
\end{corollary}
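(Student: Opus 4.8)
The plan is to treat part (1) as the real content: once $1-qp\in R^{-1}$ and $p+q-qp\in R^{-1}$ are in hand, parts (2) and (3) follow by feeding these into Theorem~\ref{Theorem 3.9} (which is legitimate since $R^{-1}\subseteq R^{\dag}$ with $a^{\dag}=a^{-1}$) and then simplifying. Concretely, the hypothesis $pR\oplus qR=R$ means $pR+qR=R$ and $pR\cap qR=\{0\}$, and I will show along the way that this forces $\overline{p}R\oplus\overline{q}R=R$, i.e.\ $\overline{p}R+\overline{q}R=R$ and $\overline{p}R\cap\overline{q}R=\{0\}$. Granting invertibility, Theorem~\ref{Theorem 3.9}(3) already gives $[1-(1-qp)^{-1}\overline{q}]R=qR$ and Theorem~\ref{Theorem 3.9}(4) gives $[p(p+q-qp)^{-1}]R=pR$; the remaining identities in (2) and (3) come from Theorem~\ref{Theorem 3.9}(2) and (5) after substituting $\overline{p}R\cap\overline{q}R=\{0\}$ (so the orthogonal summand collapses) and $\overline{p}R+\overline{q}R=R$ (so $pR\cap(\overline{p}R+\overline{q}R)=pR$ and $(\overline{p}R+\overline{q}R)\cap qR=qR$). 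This half is pure bookkeeping once (1) and the complementarity of $\overline{p}R,\overline{q}R$ are established.

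To set up (1), I would use that $pR\oplus qR=R$ is an internal direct sum of right ideals, so there is an idempotent $e$ with $eR=pR$ and $(1-e)R=qR$. Reading off the membership relations $e\in pR$, $p\in eR$, $q\in(1-e)R$, $(1-e)\in qR$ yields $pe=e$, $ep=p$, $eq=0$ and $qe=q+e-1$; applying $*$ gives the companion relations $pe^{*}=p$, $e^{*}p=e^{*}$, $qe^{*}=0$, $e^{*}q=q+e^{*}-1$ for the idempotent $e^{*}$. A short verification then shows $e^{*}R=\overline{q}R$ and $(1-e^{*})R=\overline{p}R$, whence $\overline{p}R\oplus\overline{q}R=R$; this is exactly the complementarity needed in (2) and (3).

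The heart of the proof is to show $1-pq\in R^{-1}$ directly, after which $1-qp=(1-pq)^{*}\in R^{-1}$, and $p+q-qp=1-\overline{q}\,\overline{p}\in R^{-1}$ by applying the same statement to the pair $\overline{q},\overline{p}$ (valid since $\overline{q}R\oplus\overline{p}R=R$). I would produce an explicit one-sided inverse. Using $(1-pq)e^{*}=e^{*}$ (from $qe^{*}=0$) and $(1-pq)(1-e)=\overline{p}$ (from $pe=e$ and $qe=q+e-1$), together with $\overline{p}\,\overline{e^{*}}=\overline{e^{*}}$ (from $pe^{*}=p$) and $\overline{e^{*}}+e^{*}=1$, one checks that
\[
  s=(1-e)\,\overline{e^{*}}+e^{*}
\]
satisfies $(1-pq)s=1$. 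Running the identical construction with $p$ and $q$ interchanged gives a right inverse $s'$ of $1-qp$, and then $(s')^{*}$ is a left inverse of $1-pq$; a left inverse together with a right inverse forces $1-pq\in R^{-1}$.

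The main obstacle is precisely this explicit inversion: none of the earlier lemmas manufactures membership in $R^{\dag}$, let alone $R^{-1}$, out of the bare direct-sum hypothesis — they all take $1-pq\in R^{\dag}$ as an input — so invertibility must be produced by hand rather than cited. The idea that unlocks it is to bring in the adjoint idempotent $e^{*}$, which simultaneously certifies $\overline{p}R\oplus\overline{q}R=R$ and supplies the summand on which $1-pq$ acts as the identity, making the guessed right inverse $s$ work. As an alternative to the symmetry trick for the left inverse, I could instead prove that left multiplication by $1-pq$ is injective: if $(1-pq)x=0$ then $x=px$ and $y:=qx$ satisfies $y=qy$ and $x=py$, so $\overline{q}x+\overline{p}y=0$; the directness of $\overline{p}R\oplus\overline{q}R=R$ kills both summands, forcing $x=y\in pR\cap qR=\{0\}$, and injectivity combined with the right inverse $s$ again yields $1-pq\in R^{-1}$.
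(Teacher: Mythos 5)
Your proof is correct, and it takes a genuinely different route from the paper's for the substantive part. For parts (2) and (3) you do exactly what the paper does: observe $R^{-1}\subseteq R^{\dag}$, feed the invertibility into Theorem~\ref{Theorem 3.9}, and collapse the right-hand sides of Theorem~\ref{Theorem 3.9}(2) and (5) using $\overline{p}R+\overline{q}R=R$ and $\overline{p}R\cap\overline{q}R=\{0\}$. The divergence is in part (1) and in the complementarity fact $\overline{p}R\oplus\overline{q}R=R$: the paper disposes of both by citation to \cite{Koliha Rakocevic 2003} (its Theorem 4.4 and Lemma 2.1, respectively), whereas you prove them from scratch. Your construction checks out in every detail: the idempotent $e$ with $eR=pR$ and $(1-e)R=qR$ exists by the standard Peirce argument for a direct sum of right ideals; the relations $pe=e$, $ep=p$, $eq=0$, $qe=q+e-1$ and their adjoints are right; the verifications $e^{*}R=\overline{q}R$, $(1-e^{*})R=\overline{p}R$ hold, giving $\overline{p}R\oplus\overline{q}R=R$; and the key identities $(1-pq)(1-e)=\overline{p}$, $(1-pq)e^{*}=e^{*}$, $\overline{p}\,\overline{e^{*}}=\overline{e^{*}}$ make $s=(1-e)\,\overline{e^{*}}+e^{*}$ a right inverse of $1-pq$, with the left inverse supplied by the $*$-symmetry under $p\leftrightarrow q$ (or by your injectivity variant, which is also sound). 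What your approach buys: the proof is self-contained, the inverses are explicit, and it exposes that part (1) holds in an arbitrary ring with involution --- the $*$-reducing hypothesis is needed only through Theorem~\ref{Theorem 3.9} in parts (2) and (3). What the paper's approach buys: brevity, by outsourcing the two nontrivial facts to known results of Koliha and Rako\v{c}evi\'{c}.
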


\begin{proof}
(1) By \cite[Theorem 4.4]{Koliha Rakocevic 2003}.

(2) Since $pR\oplus qR = R$, we have $\overline{p}R\oplus \overline{q}R = R$ by \cite[Lemma 2.1]{Koliha Rakocevic 2003}.
    Now $[(1-qp)^{-1}\overline{q}]R$ = $[p(p+q-qp)^{-1}]R = pR$ follows by Theorem \ref{Theorem 3.9}.

(3) Similar to (2).
\end{proof}

\begin{theorem}\label{Theorem 3.11}
Let $p$ and $q$ be projections in a $*$-reducing ring $R$ such that $1-qp\in R^{\dag}$.

\emph{(1)} $(\overline{q}p)^{\dag} = (1-qp)^{\dag}\overline{q}$ if and only if $pR + qR = R$.

\emph{(2)} $(\overline{q}p)^{\dag} = p(p+q-qp)^{\dag}$ if and only if $pR \cap qR = \{0\}$.

\emph{(3)} $(1-qp)^{\dag}\overline{q} = p(p+q-qp)^{\dag}$ if and only if $pR \oplus qR = R$.
\end{theorem}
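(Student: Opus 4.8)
The guiding principle is that an idempotent is recovered from its range together with its co-range. I would first record the elementary fact that for idempotents $u,v\in R$ one has $u=v$ if and only if $uR=vR$ and $(1-u)R=(1-v)R$: the inclusion $uR\subseteq vR$ forces $vu=u$, the inclusion $(1-u)R\subseteq(1-v)R$ forces $(1-v)(1-u)=1-u$ and hence $vu=v$, so $u=v$. Since $1-qp\in R^{\dag}$ is equivalent to $1-pq\in R^{\dag}$ (Lemma \ref{corollary 2.5}(1)$\Leftrightarrow$(5)), all three idempotents
$$e=(\overline{q}p)^{\dag},\qquad f=(1-qp)^{\dag}\overline{q},\qquad g=p(p+q-qp)^{\dag}$$
are defined, and Theorems \ref{Theorem 3.4} and \ref{Theorem 3.9} already supply their ranges and co-ranges: $eR=pR\cap(\overline{p}R+\overline{q}R)$, $(1-e)R=(\overline{p}R\cap\overline{q}R)\oplus^{\bot}qR$, $fR=[pR\cap(\overline{p}R+\overline{q}R)]\oplus^{\bot}(\overline{p}R\cap\overline{q}R)$, $(1-f)R=qR$, $gR=pR$, and $(1-g)R=[(\overline{p}R+\overline{q}R)\cap qR]\oplus^{\bot}(\overline{p}R\cap\overline{q}R)$.

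The second ingredient is a self-duality of the hypothesis. The pair $(\overline{p},\overline{q})$ again satisfies the standing assumption, because $1-\overline{p}\,\overline{q}=p+q-pq=(p+q-qp)^{*}$ and $p+q-qp\in R^{\dag}$ by Lemma \ref{corollary 2.5}(1)$\Leftrightarrow$(8), so that MP-invertibility passes to the adjoint. Using $1-pq=\overline{p}+p\overline{q}$ and $1-\overline{p}\,\overline{q}=p+\overline{p}q$ together with Remark \ref{Remark 3.8} (applied both to $(p,q)$ and to $(\overline{p},\overline{q})$) and the forward implication proved in Corollary \ref{Corollary 3.6}(2), I would establish
$$pR+qR=R\iff\overline{p}R\cap\overline{q}R=\{0\},\qquad pR\cap qR=\{0\}\iff\overline{p}R+\overline{q}R=R;$$
indeed the first equivalence holds for every admissible pair, and the second is the first applied to $(\overline{p},\overline{q})$. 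I would also note the trivial cancellation $\overline{q}R\cap qR=\{0\}$ (since $q\overline{q}=0$) and, more generally, that in a $*$-reducing ring an orthogonal direct summand contained in the other summand must vanish.

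With this in hand each equivalence is a matter of matching ranges. For (1), the co-ranges $(1-e)R=(\overline{p}R\cap\overline{q}R)\oplus^{\bot}qR$ and $(1-f)R=qR$ agree precisely when $\overline{p}R\cap\overline{q}R=\{0\}$, and likewise $fR=eR\oplus^{\bot}(\overline{p}R\cap\overline{q}R)$ collapses to $eR$ under the same condition; by the first equivalence this says $pR+qR=R$. For (2), comparing $eR=pR\cap(\overline{p}R+\overline{q}R)$ with $gR=pR$ shows $eR=gR$ iff $pR\subseteq\overline{p}R+\overline{q}R$, which (adding $\overline{p}R$ and using $pR+\overline{p}R=R$) is equivalent to $\overline{p}R+\overline{q}R=R$, i.e. to $pR\cap qR=\{0\}$; one then checks that this same condition collapses $(1-g)R$ to $qR\oplus^{\bot}(\overline{p}R\cap\overline{q}R)=(1-e)R$, so the two co-ranges also agree. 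For (3), I would work with the co-ranges: $(1-f)R=qR$ must equal $(1-g)R=[(\overline{p}R+\overline{q}R)\cap qR]\oplus^{\bot}(\overline{p}R\cap\overline{q}R)$, and since $\overline{p}R\cap\overline{q}R\subseteq\overline{q}R\cap qR=\{0\}$ forces the second summand to vanish and then $qR\subseteq\overline{p}R+\overline{q}R$, this matching is equivalent to having both $\overline{p}R\cap\overline{q}R=\{0\}$ and $\overline{p}R+\overline{q}R=R$, that is $pR\oplus qR=R$; conversely that condition collapses both $fR$ and the two co-ranges, giving $f=g$.

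I expect the main obstacle to be the clean formulation of the self-dual equivalences in the second paragraph — tying $pR+qR=R$ to $\overline{p}R\cap\overline{q}R=\{0\}$ through invertibility and the observation that the hypothesis survives complementation — since once these are available the three parts reduce to orthogonal-direct-sum bookkeeping, where the $*$-reducing property is invoked repeatedly to discard summands lying in their own orthogonal complement.
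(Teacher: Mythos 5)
Your proposal is correct, and it takes a genuinely different route from the paper's proof. The paper never invokes your key principle that an idempotent is determined by its range together with its co-range; instead it compares explicit element formulas: from (2.9), (2.10), (2.23) and Lemma \ref{Lemma 2.3}(3) it gets $(\overline{q}p)^{\dag}=(p-a)^{\dag}(p-a)-(p-a)^{\dag}b$, $(1-qp)^{\dag}\overline{q}=(p-a)^{\dag}(p-a)-(p-a)^{\dag}b+1-p-dd^{\dag}$ and $p(p+q-qp)^{\dag}=p-(p-a)^{\dag}b$, so that part (1) reduces to the projection identity $1-p=dd^{\dag}$, part (2) to $p=(p-a)(p-a)^{\dag}$, and these are then translated into $pR+qR=R$ and $pR\cap qR=\{0\}$ via Lemma \ref{Lemma 3.2} and a lemma quoted from Ben\'{i}tez and Cvetkovi\'{c}-Ili\'{c}. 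You instead treat the three idempotents as black boxes whose ranges and co-ranges are supplied by Theorems \ref{Theorem 3.4} and \ref{Theorem 3.9}, and reduce the theorem to ideal bookkeeping; the two nonobvious ingredients you supply are the stability of the hypothesis under complementation ($1-\overline{p}\,\overline{q}=(p+q-qp)^{*}\in R^{\dag}$ by Lemma \ref{corollary 2.5}(1)$\Leftrightarrow$(8)) and the duality $pR+qR=R\Leftrightarrow\overline{p}R\cap\overline{q}R=\{0\}$, which you correctly assemble from Remark \ref{Remark 3.8} applied to the complementary pair and the vanishing argument inside the proof of Corollary \ref{Corollary 3.6}(2); all of this precedes Theorem \ref{Theorem 3.11}, so there is no circularity, and your range-matching steps (including discarding an orthogonal summand contained in the other summand, either by $*$-reduction or by $qR\cap\overline{q}R=\{0\}$) are sound. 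The trade-off: the paper's computation stays element-theoretic and yields the characterizations $1-p=dd^{\dag}$ and $p=(p-a)(p-a)^{\dag}$ as by-products, while your argument avoids all new computation and makes the three parts visibly parallel. In one respect yours is tighter: the paper settles (3) as ``an immediate consequence of (1) and (2)'', but the direction $(1-qp)^{\dag}\overline{q}=p(p+q-qp)^{\dag}\Rightarrow pR\oplus qR=R$ does not follow from the statements of (1) and (2) alone --- it needs the displayed formulas (the differences of the two idempotents from $(\overline{q}p)^{\dag}$ are projections dominated by $1-p$ and by $p$ respectively, hence equality forces both to vanish) --- whereas your co-range comparison yields that direction outright.
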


\begin{proof}
(1) First, we prove that $(\overline{q}p)^{\dag} = (1-qp)^{\dag}\overline{q}$ if and only if $1-p = dd^{\dag}$.
    Indeed, $1-qp\in R^{\dag}$ and (2.9) imply
    $$(1-qp)^{\dag}\overline{q} = (p-a)(p-a)^{\dag}- (p-a)^{\dag}b+1-p-dd^{\dag}.$$
    Meanwhile, by $1-qp\in R^{\dag}$ and Lemma \ref{corollary 2.5}(4)$\Leftrightarrow$(5) we have $\overline{q}p\in R^{\dag}$.
    Hence, according to Lemma \ref{Lemma 3.2}(1), one can see that
    \begin{eqnarray}
           (\overline{q}p)^{\dag} & = & [(p\overline{q})^{\dag}]^{*}
                                    =   (p\overline{q}p)^{\dag}\overline{q}
                                    =   (p\overline{q}p)^{\dag}\overline{q}(p+1-p)\nonumber\\
                                  & = & (p-a)^{\dag}(p-a)+(p-a)^{\dag}\overline{q}(1-p)\nonumber\\
                                  & = & (p-a)^{\dag}(p-a)- (p-a)^{\dag}b.
    \end{eqnarray}
    Thus, $(\overline{q}p)^{\dag} = (1-qp)^{\dag}\overline{q}$ if and only if $1-p = dd^{\dag}$.

    Next, we prove that $1-p = dd^{\dag}$ if and only if $pR + qR = R$.
    Indeed, by \cite[Lemma 4.1(4)]{Benitez CvetkovicIlic},
    $dd^{\dag}R = dR = (1-p)R$ if and only $1-p = dd^{\dag}$.
    In view of $1-qp\in R^{\dag}$ and Lemma \ref{corollary 2.5}(5)$\Leftrightarrow$(6), we get $(1-p)q\in R^{\dag}$.
    According to Lemma \ref{Lemma 3.2}(4), we know that $pR + qR = R$ if and only if $dd^{\dag}R = dR = (1-p)R$ if and only if $1-p = dd^{\dag}$.

(2) Since $1-qp\in R^{\dag}$, we have $p(p+q-qp)^{\dag} = p-pq\overline{p}(\overline{p}q\overline{p})^{\dag}$ by (2.10).
    By Lemma \ref{corollary 2.5}(3)$\Leftrightarrow$(5)$\Leftrightarrow$(6), it follows that $p(1-q), (1-p)q\in R^{\dag}$.
    Whence
    $$p(p+q-qp)^{\dag} = p-pq\overline{p}(\overline{p}q\overline{p})^{\dag} = p-(p-a)^{\dag}b$$
    by Lemma \ref{Lemma 2.3}(3).
    Combining this with (2.23), we can see that $(\overline{q}p)^{\dag} = p(p+q-qp)^{\dag}$ if and only if $p=(p-a)(p-a)^{\dag}$.

    On the other hand, we have
    \begin{eqnarray*}
           pR \cap qR =  \{0\} \Leftrightarrow p=(p-a)(p-a)^{\dag}
    \end{eqnarray*}
    by Lemma \ref{Lemma 3.2}(1)(3).
    Therefore, $(\overline{q}p)^{\dag} = p(p+q-qp)^{\dag}$ if and only if $pR \cap qR = \{0\}$.

(3) is an immediate consequence of (1) and (2).
\end{proof}

We note that the hypothesis that $R$ is $*$-reducing can not be removed from Theorem \ref{Theorem 3.4} and Theorem \ref{Theorem 3.9}
(see \cite[Example 6]{Zhang Zhang Chen Wang}).
But we don't know whether this hypothesis can be removed from Theorem \ref{Theorem 3.11}.

We complete this section with the following result.

\begin{theorem}\label{Theorem 3.13}
Let $p$ and $q$ be two projections in $R$ such that $p(1-q), (1-p)q \in R^{\dag}$, then
$[1-p\overline{q}(p\overline{q}p)^{\dag}-\overline{p}q(\overline{p}q\overline{p})^{\dag}]R
  = (pR\cap qR)\oplus^{\bot} (\overline{p}R\cap\overline{q}R)$
where $1-p\overline{q}(p\overline{q}p)^{\dag}-\overline{p}q(\overline{p}q\overline{p})^{\dag}$ is a projection.
\end{theorem}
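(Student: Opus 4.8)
The plan is to express the idempotent candidate
\[
w := 1 - p\overline{q}(p\overline{q}p)^{\dag} - \overline{p}q(\overline{p}q\overline{p})^{\dag}
\]
as a sum $y + p'$ of two mutually orthogonal projections whose ranges are precisely $pR\cap qR$ and $\overline{p}R\cap\overline{q}R$, and then to read off both the projection property and the range decomposition at once.

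First I would invoke Lemma \ref{Lemma 3.2}(3): since $p\overline{q}\in R^{\dag}$, the element $y := p - p(p\overline{q})^{\dag}$ is a projection with $yR = pR\cap qR$; applying the same lemma with $p,q$ replaced by $\overline{p},\overline{q}$ (legitimate because $\overline{p}q\in R^{\dag}$) shows that $p' := \overline{p} - \overline{p}(\overline{p}q)^{\dag}$ is a projection with $p'R = \overline{p}R\cap\overline{q}R$. Next, Lemma \ref{Lemma 3.2}(1) (with the corresponding substitutions) gives $p(p\overline{q})^{\dag} = p\overline{q}(p\overline{q}p)^{\dag}$ and $\overline{p}(\overline{p}q)^{\dag} = \overline{p}q(\overline{p}q\overline{p})^{\dag}$. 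Substituting these into $w$ and writing $1 = p + \overline{p}$ collapses $w$ to
\[
w = \bigl(p - p(p\overline{q})^{\dag}\bigr) + \bigl(\overline{p} - \overline{p}(\overline{p}q)^{\dag}\bigr) = y + p'.
\]

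The one genuine point is the orthogonality $yp' = p'y = 0$, which I expect to be the crux of the argument. From $p^{2}=p$ one gets $py = y$ directly, and self-adjointness of the projection $y$ then forces $yp = y$; thus $y$ lies in the corner $pRp$. Symmetrically $\overline{p}p' = p'\overline{p} = p'$, so $p'$ lies in $\overline{p}R\overline{p}$. Since $p\overline{p} = \overline{p}p = 0$, this yields $yp' = y(p\overline{p})p' = 0$ and $p'y = p'(\overline{p}p)y = 0$. Everything else is formal once this corner orthogonality is in place.

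Finally I would assemble the conclusion. Orthogonality of the projections $y,p'$ gives $(y+p')^{2} = y + p'$ and $(y+p')^{*} = y+p'$, so $w = y+p'$ is a projection. From $wy = y$ and $wp' = p'$ we obtain $yR + p'R \subseteq wR$, while $w = y+p' \in yR + p'R$ gives the reverse inclusion, so $wR = yR + p'R = (pR\cap qR) + (\overline{p}R\cap\overline{q}R)$. Moreover $p'y = 0$ means $(p'r_{2})^{*}(yr_{1}) = r_{2}^{*}p'yr_{1} = 0$ for all $r_{1},r_{2}\in R$, i.e. $yR \bot p'R$. Hence $wR = (pR\cap qR)\oplus^{\bot}(\overline{p}R\cap\overline{q}R)$, as claimed. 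Note that only Lemma \ref{Lemma 3.2}(1),(3) and the involution axioms enter, so no $*$-reducing hypothesis is required.
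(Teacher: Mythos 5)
Your proof is correct, and its skeleton coincides with the paper's: both arguments reduce the theorem to the decomposition $w = p_{1} + q_{1}$, where $p_{1} = p - p(p\overline{q})^{\dag}$ and $q_{1} = \overline{p} - \overline{p}(\overline{p}q)^{\dag}$ are the projections generating $pR\cap qR$ and $\overline{p}R\cap\overline{q}R$ via Lemma \ref{Lemma 3.2}(3), and both use Lemma \ref{Lemma 3.2}(1) to identify $w$ with $p_{1}+q_{1}$. Where you diverge is the concluding step. The paper computes $\overline{p_{1}}q_{1} = q_{1}$, hence $(\overline{p_{1}}q_{1})^{\dag} = q_{1}$, and then invokes Lemma \ref{Lemma 3.2}(2) with the pair $(p_{1}, q_{1})$ to conclude that $p_{1}+\overline{p_{1}}(\overline{p_{1}}q_{1})^{\dag} = p_{1}+q_{1}$ is a projection generating $p_{1}R + q_{1}R$, asserting the orthogonality $q_{1}p_{1}=0$ separately in order to replace $+$ by $\oplus^{\bot}$. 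You instead establish the two-sided orthogonality $p_{1}q_{1} = q_{1}p_{1} = 0$ directly from the corner containments ($pp_{1} = p_{1}$ plus self-adjointness gives $p_{1}p = p_{1}$, and symmetrically $q_{1}\overline{p} = \overline{p}q_{1} = q_{1}$), and then verify the idempotency, self-adjointness, and the equality $wR = p_{1}R + p'R$ by elementary manipulations. This buys a more self-contained finish --- Lemma \ref{Lemma 3.2}(2) is not needed at all, and the orthogonality computation that the paper leaves implicit becomes transparent --- while the paper's route instead reuses its general machinery for ideals generated by sums of projections. Both arguments work in an arbitrary ring with involution, so your closing observation that no $*$-reducing hypothesis is required agrees with the way the statement is formulated.
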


\begin{proof}
Since $p(1-q)\in R^{\dag}$,
it follows that $pR\cap qR$ is generated by the projection $p-p(p\overline{q})^{\dag}$ by Lemma \ref{Lemma 3.2}(3).
Similarly, $\overline{p}R\cap \overline{q}R$ is generated by the projection $\overline{p}-\overline{p}(\overline{p}q)^{\dag}$
since $(1-p)q\in R^{\dag}$.
Let
$p_{1} = p-p(p\overline{q})^{\dag}$ and $q_{1} = \overline{p}-\overline{p}(\overline{p}q)^{\dag}$ for short.
Then we have
$\overline{p_{1}}q_{1} = [1-p+p(p\overline{q})^{\dag}][\overline{p}-\overline{p}(\overline{p}q)^{\dag}]
                       = \overline{p}-\overline{p}(\overline{p}q)^{\dag} = q_{1}$
and hence
$(\overline{p_{1}}q_{1})^{\dag} = q_{1}$.
Replacing $p$ and $q$ by $p_{1}$ and $q_{1}$ respectively in Lemma \ref{Lemma 3.2}(2),
we can see that
$(pR\cap qR)+(\overline{p}R\cap \overline{q}R)$ is generated by the projection $p_{1}+\overline{p_{1}}(\overline{p}_{1}q_{1})^{\dag}$.

Note that $p_{1}R\bot q_{1}R$ since $q_{1}p_{1} = 0$.
Hence $p_{1}R+q_{1}R = p_{1}R\oplus^{\bot}q_{1}R$, i.e.,
$(pR\cap qR)+(\overline{p}R\cap \overline{q}R) = (pR\cap qR)\oplus^{\bot}(\overline{p}R\cap \overline{q}R)$.

Finally, we have
\begin{eqnarray*}
             p_{1}+\overline{p_{1}}(\overline{p}_{1}q_{1})^{\dag}
       & = & p_{1}+q_{1}
         =   p-p(p\overline{q})^{\dag}+1-p-\overline{p}(\overline{p}q)^{\dag} \\
       & = & 1-p\overline{q}(p\overline{q}p)^{\dag}-\overline{p}q(\overline{p}q\overline{p})^{\dag}
\end{eqnarray*}
by Lemma \ref{Lemma 3.2}(1).
\end{proof}

\vskip 2mm

\centerline {\bf ACKNOWLEDGMENTS} This research is supported by the National Natural Science Foundation of China (11201063),
the Specialized Research Fund for the Doctoral Program of Higher Education (20120092110020),
the Natural Science Foundation of Jiangsu Province(BK2010393) and the Foundation of Graduate Innovation Program of Jiangsu Province(CXZZ12-0082).

\end{document}